\documentclass[11pt]{amsart}
\usepackage[top=2cm,bottom=2cm,left=1.5cm,right=1.5cm,marginparwidth=1.75cm]{geometry}

\usepackage[square,compress,comma,numbers,sort]{natbib}
\usepackage[colorlinks=true, citecolor=red, linkcolor=blue]{hyperref}
\usepackage{amsfonts,mathtools,mathabx,amssymb}

\usepackage[shortlabels]{enumitem}

\usepackage{amsthm}
\usepackage{cleveref}
\usepackage{aliascnt}
\usepackage{xcolor}

\newtheorem{theorem}{Theorem}[section]

\newaliascnt{lemma}{theorem}
\newtheorem{lemma}[lemma]{Lemma}
\aliascntresetthe{lemma}
\crefname{lemma}{Lemma}{Lemmas}
\Crefname{lemma}{Lemma}{Lemmas}

\newaliascnt{proposition}{theorem}
\newtheorem{proposition}[proposition]{Proposition}
\aliascntresetthe{proposition}
\crefname{proposition}{Proposition}{Propositions}
\Crefname{proposition}{Proposition}{Propositions}

\newaliascnt{corollary}{theorem}

\aliascntresetthe{corollary}
\crefname{corollary}{Corollary}{Corollaries}
\Crefname{corollary}{Corollary}{Corollaries}

\theoremstyle{definition}
\newaliascnt{definition}{theorem}

\aliascntresetthe{definition}
\crefname{definition}{Definition}{Definitions}
\Crefname{definition}{Definition}{Definitions}

\newaliascnt{example}{theorem}
\newtheorem{example}[example]{Example}
\aliascntresetthe{example}
\crefname{example}{Example}{Examples}
\Crefname{example}{Example}{Examples}

\newaliascnt{assumption}{theorem}

\aliascntresetthe{assumption}
\crefname{assumption}{Assumption}{Assumptions}
\Crefname{assumption}{Assumption}{Assumptions}

\theoremstyle{remark}
\newaliascnt{remark}{theorem}
\newtheorem{remark}[remark]{Remark}
\aliascntresetthe{remark}
\crefname{remark}{Remark}{Remarks}
\Crefname{remark}{Remark}{Remarks}

\allowdisplaybreaks[4]

\DeclareMathOperator*{\sgn}{sgn}
\DeclareMathOperator*{\Cov}{Cov}
\DeclareMathOperator*{\Var}{Var}
\DeclareMathOperator*{\argmin}{argmin}

\providecommand{\supp}{\operatorname{supp}}

\newcommand{\Pp}{\mathbb P}
\newcommand{\Ee}{\mathbb E}
\newcommand{\cH}{\mathcal H}
\newcommand{\cM}{\mathcal M_1}

\title{High Minima of Gaussian Processes: Overshoots and Minimizer Locations}

\author{Enkelejd Hashorva}
\address{Enkelejd Hashorva, Department of Actuarial Science,
University of Lausanne, UNIL-Dorigny, 1015 Lausanne, Switzerland}
\email{Enkelejd.Hashorva@unil.ch}

\author{Svyatoslav Novikov}
\address{Svyatoslav Novikov, Department of Actuarial Science,
University of Lausanne, UNIL-Dorigny, 1015 Lausanne, Switzerland}
\email{Svyatoslav.Novikov@unil.ch}

\date{\today}

\subjclass[2020]{Primary 60G15; Secondary 60G70, 60G17, 60G22}
\keywords{Gaussian process, infimum, argmin, weak convergence, reproducing kernel Hilbert space, covariance energy}

\begin{document}

\begin{abstract}
Let $X(t)$, $t\in K$, be a centred Gaussian process with continuous sample
paths on a compact metric space $K$, and let
$M=\min_{t\in K}X(t)$. Let $\sigma_*^2$ denote the minimum covariance
energy associated with $X$, and assume that $\sigma_*^2>0$.  Motivated by the results of
\cite{chakrabarty2018asymptotic} for smooth Gaussian processes, we show that,
conditionally on $M>u$, the scaled overshoot $u(M-u)$ converges, as
$u\to\infty$, to an exponential random variable with mean $\sigma_*^2$.
Moreover, every weak subsequential limit of the conditional law of a
measurable minimizer of $X$ is an optimal covariance-energy measure.  In
particular, if this measure is unique, then the conditional law converges
weakly to it.  The results are illustrated by stationary Gaussian processes,
fractional Brownian motion, and fractional Brownian sheet.
\end{abstract}

\maketitle

\section{Introduction}
Let
$\boldsymbol X=(X_1,\ldots,X_n)^{\mathsf T}$ be a centred Gaussian vector
with positive definite covariance matrix $\Sigma$.  Write $\boldsymbol1$
for the all-ones vector and set
\[
 K=\{1,\ldots,n\},\qquad
 M=\min_{i\in K}X_i,
 \qquad
 \sigma_*^2=\min_{\boldsymbol p\geq0,
             \,\boldsymbol1^{\mathsf T}\boldsymbol p=1}
             \boldsymbol p^{\mathsf T}\Sigma\boldsymbol p.
\]
Let $\boldsymbol p$ be the unique minimizer and define
\[
 I=\{i\in K:p_i>0\},\qquad
 E=\{i\in K:(\Sigma\boldsymbol p)_i=\sigma_*^2\},\qquad
 P=E\setminus I,\qquad N=K\setminus E.
\]
For a vector $\boldsymbol v$ and $A,B\subset K$, write
$\boldsymbol v_A=(v_i)_{i\in A}$,
$\Sigma_{AB}=(\Sigma_{ij})_{i\in A,j\in B}$, and
$\boldsymbol1_A=(1)_{i\in A}$.  Vector inequalities are understood
componentwise.
The active set $I$ is necessarily nonempty and moreover in view of  
\cite[Lemma~4.1]{debicki2020extremes}
\[ 
 \boldsymbol\theta=\Sigma_{II}^{-1}\boldsymbol1_I
 =\frac{\boldsymbol p_I}{\sigma_*^2}>0,
 \qquad
 \boldsymbol r_N=\Sigma_{NI}\Sigma_{II}^{-1}\boldsymbol1_I
 >\boldsymbol1_N.
\]
In particular, we have 
\[
 \sum_{i\in I}\theta_i=\frac1{\sigma_*^2}.
\]
If $P$ is nonempty, let $G_P$ be a centred
Gaussian random vector with covariance
\[
 \Gamma_P=\Sigma_{PP}-\Sigma_{PI}\Sigma_{II}^{-1}\Sigma_{IP},
 \qquad c_P=\Pp\{G_P>0\},
\]
and set $c_P=1$ if $P$ is empty.  We write $\mathcal L(V)$ for the law
of a random element $V$.  Throughout, let $(\xi_i)_{i\geq1}$ be
independent unit exponential random variables.  If $P$ is nonempty, let
$G_P^+$ have law $\mathcal L(G_P\mid G_P>0)$ and be independent of
$(\xi_i)_{i\geq1}$.  \\
The Gaussian specialization of
\cite[Example~6.1 and Theorem~5.1]{Hashorva2007} presented  in
Theorem~\ref{thm:finite-gaussian} below, gives as $u\to\infty$
\begin{equation}
 \Pp\{M>u\}\sim
 \frac{c_P}{(2\pi)^{|I|/2}\det(\Sigma_{II})^{1/2}
              \prod_{i\in I}\theta_i}
 u^{-|I|}\exp\left\{-\frac{u^2}{2\sigma_*^2}\right\}
 \label{eq:intro-finite-tail}
\end{equation}
and the following conditional convergence in distribution:
\begin{equation}
 \left(
   (u(X_i-u))_{i\in I},
   (X_j-u)_{j\in P},
   \frac{\boldsymbol X_N}{u}
 \right)
 \Longrightarrow
 \left(
   (\xi_i/\theta_i)_{i\in I},
   G_P^+,
   \boldsymbol r_N
 \right), \qquad u\to \infty.
 \label{eq:intro-finite-excess}
\end{equation}
The $G_P^+$ and $\boldsymbol r_N$ blocks are omitted
when $P$ and $N$, respectively, are empty.

If $T$ denotes  the almost surely unique index attaining
$\min_{i\in K}X_i$, then  
\begin{equation}
\label{eA}
 \mathcal L\bigl(u(M-u)\mid M>u\bigr)
 \Longrightarrow \mathcal L(\sigma_*^2\xi_1),
 \qquad
 \mathcal L(T\mid M>u)\Longrightarrow
 \sum_{i\in I}p_i\delta_i, \qquad u\to\infty.
\end{equation}
Here $\delta_i$ denotes the unit point mass at $i$.
The almost-sure uniqueness follows from
$\Var(X_i-X_j)>0$ for $i\ne j$.
Both limits in \eqref{eA} follow from \eqref{eq:intro-finite-excess}.
Indeed, the passive-contact and noncontact coordinates are asymptotically
larger than the active excesses on the scale $u^{-1}$:
\[
 \min_{j\in P}u(X_j-u)\longrightarrow+\infty,
 \qquad
 \min_{j\in N}u(X_j-u)\longrightarrow+\infty, \quad u\to\infty
\] 
in conditional probability, with the corresponding assertion omitted
for an empty set.  For the first limit this follows because $G_P^+$ is
coordinatewise strictly positive almost surely; for the second it follows
from $\boldsymbol r_N>\boldsymbol1_N$.  Consequently, with conditional
probability tending to one, both $M$ and $T$ are determined by the active
coordinates.  Let
\[
 S_*=\min_{i\in I}\frac{\xi_i}{\theta_i},
 \qquad
 T_*=\argmin_{i\in I}\frac{\xi_i}{\theta_i}.
\]
The limiting coordinates are independent and have continuous laws, so
the minimum and argmin mappings are almost surely continuous at the limit
vector.  Moreover, for $x\geq0$,
\[
 \Pp\{S_*>x\}
 =\prod_{i\in I}e^{-\theta_i x}
 =\exp\left\{-\frac{x}{\sigma_*^2}\right\},
 \qquad
 \Pp\{T_*=i\}
 =\frac{\theta_i}{\sum_{j\in I}\theta_j}=p_i.
\]
Thus $S_*\stackrel{d}{=}\sigma_*^2\xi_1$, which proves \eqref{eA}; see also
\cite[Proposition~3.3(ii)]{chakrabarty2018asymptotic} for a direct
derivation when $P$ is empty.

Exact asymptotics for the infimum of a centred Gaussian process
$X(t)$, $t\in K=[a,b]$, were first derived for smooth $X$ in
\cite{chakrabarty2018asymptotic}.  In particular, Theorems~4.3 and~4.4 of
\cite{chakrabarty2018asymptotic} give the exact
asymptotics of $\Pp\{M>u\}$ and the weak convergence of both the
overshoot $u(M-u)$ and the conditional law of the location $T$ of the
minimum given $M>u$, under smoothness and further regularity assumptions.
These findings rely on properties of the optimal measure $\nu$ attaining
the minimum covariance energy $\sigma_*^2>0$.

We are interested in the more general case of a continuous centred
Gaussian process $X(t)$, $t\in K$, where $K$ is a compact metric space.
While the exact asymptotics of $\Pp\{M>u\}$ in this general setting are
studied in a companion contribution \cite{debickiHashorvaNovikov2026},
here we extend both weak-convergence results mentioned above.  Our proofs
use the reproducing kernel Hilbert space (RKHS) of $X$ and the associated
covariance-energy problem.\\

Organisation of the paper: In Section~2 we introduce the setup and state
the main results.  Section~3 contains the proofs.  The appendix gives the
tail asymptotics of Gaussian random vectors and the explicit optimal
measure for fractional Brownian motion.

\section{Main results}
 
Let $K$ be a nonempty compact metric space and let
$X(t),t\in K$ be a centred Gaussian process, regarded as a measurable
$C(K)$-valued random element.  Write
\[
 R(s,t)=\Ee\{X(s)X(t)\}, \qquad s,t\in K,
\]
for its covariance kernel.  The function $R$ is continuous; this follows,
for example, from path continuity and Fernique's theorem.  Here and below,
$\|x\|_\infty=\sup_{t\in K}|x(t)|$ for $x\in C(K)$, and Fernique's theorem
gives $\Ee\{\exp(\alpha\|X\|_\infty^2)\}<\infty$ for some $\alpha>0$.
Let $\cM(K)$ denote the Borel probability measures on $K$.  For every
finite signed Borel measure $\eta$ on $K$, define its covariance energy
and potential by
\[
 \mathcal E_R(\eta)=\iint_{K^2}R(s,t)\,\eta(ds)\eta(dt),
 \qquad
 k_\eta(t)=\int_KR(s,t)\,\eta(ds).
\]
Set
\[
 \sigma_*^2=\min_{\mu\in\cM(K)}\mathcal E_R(\mu),
 \qquad
 M=\min_{t\in K}X(t).
\]
The minimum in the definition of $\sigma_*^2$ is attained by weak
compactness and continuity.  Any measure attaining it is called an
optimal measure.
The measurable maximum theorem and the Kuratowski--Ryll-Nardzewski
selection theorem provide a Borel map $S:C(K)\to K$ such that
$S(x)\in\argmin_{t\in K}x(t)$ for every $x\in C(K)$.
We let $T=S(X)$, or take any other measurable minimizer selection.

Our first result is the overshoot limit, which is a generalisation of \cite[Theorem 4.3]{chakrabarty2018asymptotic} to non-smooth Gaussian processes.

\begin{theorem} 
\label{thm:overshoot}
If   $\sigma_*^2>0$,  then as $u\to\infty$ we have the following convergence in distribution
\[
 \mathcal L\bigl(u(M-u)\mid M>u\bigr)
 \Longrightarrow \mathcal L(\sigma_*^2\xi_1).
\] 
\end{theorem}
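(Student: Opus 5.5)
The plan is to reduce everything to a one-dimensional Gaussian tail ratio. We decompose $X$ along the optimal potential and condition on the orthogonal remainder. Fix an optimal measure $\nu\in\cM(K)$ and set
\[
 Z=\int_K X(t)\,\nu(dt),\qquad g(t)=\frac{k_\nu(t)}{\sigma_*^2},\qquad Y(t)=X(t)-Z\,g(t).
\]
Then $\Var Z=\mathcal E_R(\nu)=\sigma_*^2$ and $\Cov(Y(t),Z)=k_\nu(t)-g(t)\sigma_*^2=0$, so $Y$ is a continuous centred Gaussian process independent of $Z$.

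The first step is the first-order optimality (variational) condition for $\nu$. Perturbing $\nu$ toward $\delta_t$ gives $k_\nu(t)\ge \sigma_*^2$ for all $t$, and integrating against $\nu$ forces $k_\nu=\sigma_*^2$ $\nu$-a.e. Hence $g\ge1$ on $K$ and $g=1$ on $\supp\nu$ by continuity. Since $g\ge 1>0$, for each level $v$ the event $\{M>v\}$ equals $\{Z>a_v(Y)\}$ with
\[
 a_v(Y)=\max_{t\in K}\frac{v-Y(t)}{g(t)}.
\]
Therefore
\[
 \frac{\Pp\{M>u+x/u\}}{\Pp\{M>u\}}=\frac{\Ee\{\bar\Phi(a_{u+x/u}(Y)/\sigma_*)\}}{\Ee\{\bar\Phi(a_u(Y)/\sigma_*)\}},
\]
where $\bar\Phi$ is the standard normal tail. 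The goal is to show this ratio tends to $e^{-x/\sigma_*^2}$ for each $x>0$.

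The second step compares $a_{u+x/u}$ with $a_u$ pathwise. Since $g\ge1$, each term increases by $x/(ug(t))\le x/u$, so $a_{u+x/u}\le a_u+x/u$. This gives the lower bound on the ratio: by Mills' ratio, $\bar\Phi((a+x/u)/\sigma_*)/\bar\Phi(a/\sigma_*)\ge e^{-x(1+\varepsilon)/\sigma_*^2}(1+o(1))$ whenever $a\le (1+\varepsilon)u$.

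For the reverse inequality I show that the maximizer in $a_u$ lies in $\{g<1+\varepsilon\}$ on a good event $\{\|Y\|_\infty\le \varepsilon u/4\}$. Indeed, taking $t\in\supp\nu$ shows $a_u\ge u-\|Y\|_\infty$. On the other hand, at points with $g(t)\ge1+\varepsilon$ the value is at most $(u+\|Y\|_\infty)/(1+\varepsilon)$, which is smaller than $u-\|Y\|_\infty$ on the good event. Hence $a_{u+x/u}\ge a_u+x/(u(1+\varepsilon))$. Combined with $a_u\ge u(1-\varepsilon)$, Mills' ratio then gives an upper bound $e^{-x(1-\varepsilon)^2/\sigma_*^2}(1+o(1))$ on the integrand ratio.

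The third step, which I expect to be the main obstacle, is controlling the bad events, where Mills' ratio is not uniform. These are $\{\|Y\|_\infty>\varepsilon u/4\}$ and $\{a_u>(1+\varepsilon)u\}$. Their contributions to numerator and denominator must be $o(\Pp\{M>u\})$. By Fernique, $\Pp\{\|Y\|_\infty>\varepsilon u/4\}\le e^{-cu^2}$, but $c$ may be small, so this needs care. On $\{\|Y\|_\infty\le\delta u\}$ we have $a_u\le u+\delta u$, since $g\ge1$. Thus it suffices to bound $\Ee\{\bar\Phi(a_u/\sigma_*);\|Y\|_\infty>\delta u\}$.

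For this I use a large-deviation upper estimate: $M\le Z+\min_{\supp\nu}Y$, so $\{M>u,\ \|Y\|_\infty>\delta u\}$ requires $Z>u+\min_{\supp \nu}Y$ together with a large $Y$. Since $\min_{\supp\nu}Y\le0$ (because $\int Y\,d\nu=0$), this already forces $Z>u$. Moreover $Y$ being large has an extra cost $e^{-c\delta^2u^2}$ independent of $Z$. Hence this probability is at most $\bar\Phi(u/\sigma_*)e^{-c\delta^2u^2}$.

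These bounds need to be compared with a lower bound on $\Pp\{M>u\}$, which I obtain via Cameron--Martin. Shift $X$ by $(1+\delta)u\,g$, noting that $g$ lies in the RKHS with $\|g\|_{\cH}^2=1/\sigma_*^2$ and $g\ge1$. Applying the shift to the event $\{\min X>-\delta u\}$, whose probability tends to $1$, gives
\[
 \Pp\{M>u\}\ge \tfrac12 \exp\Bigl\{-\frac{(1+\delta)^2u^2}{2\sigma_*^2}-O(u)\Bigr\}.
\]
A finer choice, conditioning on $Z$ directly, yields $\Pp\{M>u\}\ge c\,\bar\Phi(u/\sigma_*)\,\Pp\{\|Y\|_\infty\le \text{small}\}$ only up to polynomial factors. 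So I would balance $\delta$ against $\varepsilon$, sending $\delta\to0$ slowly with $u$ (for example $\delta=u^{-1/3}$), and check that the Fernique-type and Borell--TIS bounds for $Y$ still dominate the subexponential losses. Once the bad events are negligible, letting $u\to\infty$ and then $\varepsilon\downarrow0$ yields $\Pp\{u(M-u)>x\mid M>u\}\to e^{-x/\sigma_*^2}$ for every $x>0$, which is \Cref{thm:overshoot}.
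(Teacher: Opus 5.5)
Your argument is correct, and after a shared first step it takes a genuinely different route from the paper. Both proofs rest on three ingredients:
the regression of $X$ on $Z=\int_K X\,d\nu$ (the paper writes $Y$ for this scalar, $Z$ for the remainder and $m$ for your $g$);
the facts $g\geq1$ on $K$ and $g=1$ on $\supp\nu$;
and the exact logarithmic rate $u^{-2}\log\Pp\{M>u\}\to-1/(2\sigma_*^2)$.

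The paper then never looks inside the event $\{M>u\}$ again. Gaussian measures are log-concave, and the convex sets $\{x:\min_K x\geq u\}$ move convexly in $u$, so $q(u)=\Pp\{M>u\}$ is log-concave. Hence $\psi=-\log q$ is convex with $\psi(u)\sim u^2/(2\sigma_*^2)$, and monotonicity of secant slopes alone gives $\psi(u+x/u)-\psi(u)\to x/\sigma_*^2$.

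You instead condition on the remainder, write $q(v)=\Ee\,\bar\Phi(a_v(Y)/\sigma_*)$, and compare the random thresholds pathwise. From $g\geq1$ you get $a_{u+x/u}\leq a_u+x/u$. Once the maximiser of $a_u$ is localised in $\{g<1+\varepsilon\}$, you get $a_{u+x/u}\geq a_u+x/(u(1+\varepsilon))$. After that only one-dimensional facts are needed: the hazard rate $\lambda=\varphi/\bar\Phi$ is increasing and satisfies $s\leq\lambda(s)\leq s+1/s$ for $s>0$. This makes your Mills-ratio bounds uniform over $a_u\leq(1+\varepsilon)u$ and over $a_u\geq(1-\varepsilon)u$, respectively.

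The paper's route is shorter and needs no localisation or bad-event bookkeeping, at the price of invoking infinite-dimensional Pr\'ekopa--Borell log-concavity. Yours uses more elementary tools and could be made quantitative. It also exhibits the mechanism: the overshoot is carried by the single Gaussian variable $Z$, and the minimiser sits near the contact set $\{g=1\}$, which is the content of \eqref{eq:contact}.

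Your third step is also easier than you fear. For fixed $\varepsilon$, both bad contributions are at most $\Pp\{Z>u\}\,\Pp\{\|Y\|_\infty>\varepsilon u/4\}\leq C\exp\{-u^2/(2\sigma_*^2)-c\varepsilon^2u^2\}$. This is $o(\Pp\{M>u\})$ by the logarithmic rate alone, however small $c$ is. That rate only requires your Cameron--Martin lower bound, or simply $\Pp\{M>u\}\geq\Pp\{Z>(1+\delta)u\}\,\Pp\{\|Y\|_\infty<\delta u\}$, for each fixed $\delta>0$. So there is no need to send $\delta\to0$ with $u$. If you do take $\delta=u^{-1/3}$ there, keep the threshold of the bad event a fixed multiple of $u$. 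A threshold of order $u^{2/3}$ would gain only $e^{-cu^{4/3}}$ against a loss of order $e^{-Cu^{5/3}}$.
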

If $\sigma_*^2=0$ and $\nu$ is an optimal measure, then
\[
 \int_KX(t)\nu(dt)=0
\]
almost surely.  Since $M\leq\int_KX(t)\nu(dt)$, it follows that
$\Pp\{M>u\}=0$ for every $u\geq0$.  The next lemma gives several
equivalent conditions for the positivity of $\sigma_*^2$.

\begin{lemma}
\label{lem:positivity}
The following conditions are equivalent:
\begin{align*}
 \textnormal{(i)}\quad &\mathcal E_R(\mu)>0\quad\text{for every }\mu\in\cM(K);
 &\textnormal{(ii)}\quad &\sigma_*^2>0;\\
 \textnormal{(iii)}\quad
 &\Pp\{M>u_0\}>0\quad\text{for some }u_0\geq0;
 &\textnormal{(iv)}\quad
 &\Pp\{M>u\}>0\quad\text{for every }u\in\mathbb R.
\end{align*}
\end{lemma}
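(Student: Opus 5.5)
The plan is to prove the cycle (i)$\Leftrightarrow$(ii), (iv)$\Rightarrow$(iii)$\Rightarrow$(ii)$\Rightarrow$(iv).

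\emph{(i)$\Leftrightarrow$(ii).} Since $\sigma_*^2$ is attained by some $\nu\in\cM(K)$, (i) gives $\sigma_*^2=\mathcal E_R(\nu)>0$. Conversely, $\mathcal E_R(\mu)\ge\sigma_*^2>0$ for every $\mu\in\cM(K)$.

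\emph{(iv)$\Rightarrow$(iii)} is trivial. \emph{(iii)$\Rightarrow$(ii)} is the contrapositive of the remark preceding the lemma. If $\sigma_*^2=0$ and $\nu$ is optimal, then $\int X\,d\nu$ is centred Gaussian with variance $\mathcal E_R(\nu)=0$, so it vanishes almost surely. Hence $M\le 0$ almost surely, and $\Pp\{M>u_0\}=0$ for every $u_0\ge0$.

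\emph{(ii)$\Rightarrow$(iv)} is the substantive step. I would use the RKHS $\cH$ of $X$ together with the Cameron--Martin theorem and the support theorem for Gaussian measures on the separable Banach space $C(K)$. The topological support of $\mathcal L(X)$ is the closure $\overline{\cH}$ of $\cH$ in $\|\cdot\|_\infty$. It therefore suffices to find $h\in\cH$ with $\min_K h>u$. Then $\{x:\|x-h\|_\infty<\min_Kh-u\}$ is an open neighbourhood of a support point, so it has positive probability, and on it $M>u$.

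To construct $h$, I would argue by Hahn--Banach/minimax. Consider the convex set $C=\{h\in\cH:\ h(t)\ge 1\ \forall t\}$; it suffices to show $C\neq\emptyset$, since scaling then gives $\min h>u$ for $u>0$, and $u\le0$ follows by monotonicity. I would use minimum-norm duality, which I expect to give
\[
\inf\{\|h\|_{\cH}^2:h\ge1\}=\frac{1}{\sigma_*^2}.
\]
The ``$\ge$'' direction is elementary. For $h\ge1$ and $\mu\in\cM(K)$,
\[
1\le\int h\,d\mu=\langle h,k_\mu\rangle_{\cH}\le\|h\|_{\cH}\,\mathcal E_R(\mu)^{1/2}.
\]
For existence I would argue directly. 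Let $\nu$ be optimal and put $h=k_\nu/\sigma_*^2\in\cH$, which has $\|h\|_\cH^2=1/\sigma_*^2$. The first-order optimality condition on $\cM(K)$ is obtained by perturbing $\nu\to(1-\epsilon)\nu+\epsilon\delta_t$. Expanding,
\[
\mathcal E_R\bigl((1-\epsilon)\nu+\epsilon\delta_t\bigr)=(1-\epsilon)^2\sigma_*^2+2\epsilon(1-\epsilon)k_\nu(t)+\epsilon^2R(t,t),
\]
so the derivative at $\epsilon=0$ is $-2\sigma_*^2+2k_\nu(t)\ge0$. This gives $k_\nu(t)\ge\sigma_*^2$ for all $t\in K$, hence $h\ge1$ on $K$. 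This variational inequality is the key point and it is routine. Note that it only needs $k_\nu\in\cH$, which holds because $k_\nu=\Ee\{X(\cdot)\int X\,d\nu\}$ is the image of an element of the Gaussian space.

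The main obstacle is the support theorem: we need that $\Pp\{\|X-h\|_\infty<\varepsilon\}>0$ for every $h\in\cH$ and $\varepsilon>0$. My first approach would be to combine two facts. First, Anderson's inequality, or the standard fact that a centred Gaussian measure on a separable Banach space charges every ball around $0$. Second, Cameron--Martin absolute continuity of $\mathcal L(X+h)$ with respect to $\mathcal L(X)$ for $h\in\cH$. Together these give positivity of every ball around $h$. Charging balls at $0$ follows from separability: $C(K)$ is covered by countably many $\varepsilon/2$-balls, so some ball $B(x_0,\varepsilon/2)$ has positive mass. Symmetry of the law then gives $B(-x_0,\varepsilon/2)$ positive mass as well. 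Convexity with the Gaussian structure, via independent copies and $(X-X')/\sqrt2\stackrel{d}{=}X$, then yields $\Pp\{\|X\|_\infty<\varepsilon\}\ge\Pp\{\|X-x_0\|_\infty<\varepsilon/\sqrt2\}^2>0$, since $\|X-X'\|_\infty<\sqrt2\,\varepsilon$ on that event.
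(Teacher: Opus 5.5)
Your proof is correct and follows the paper's route: (i)$\Leftrightarrow$(ii) by attainment of $\sigma_*^2$, (iii)$\Rightarrow$(ii) via $M\le\int_K X\,d\nu=0$ (the paper phrases this step as (iii)$\Rightarrow$(i), but it is the same observation), and (ii)$\Rightarrow$(iv) by taking $h=k_\nu/\sigma_*^2\ge1$ from the first-order optimality condition as a Cameron--Martin shift. The only difference is that the small-ball positivity you call the main obstacle is unnecessary, although your argument for it is valid: since $h$ can be scaled, the paper takes any $L$ with $\Pp\{\|X\|_\infty<L\}>0$ (true for large $L$) and $r>\max\{0,u+L\}$, so that $\min_K(X+rh)>u$ on that event, and equivalence of the laws of $X+rh$ and $X$ finishes the proof.
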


Consequently, if $\sigma_*^2>0$, then Lemma~\ref{lem:positivity} gives
$\Pp\{M>u\}>0$ for every $u$, so all conditional laws appearing here  
are well defined.

Our second result extends \cite[Theorem 4.4]{chakrabarty2018asymptotic} to non-smooth Gaussian processes.  It shows that the conditional law of the location of the minimum converges to the unique optimal measure, if it exists.

\begin{theorem}
\label{thm:location}
Suppose that $\sigma_*^2>0$, and let $\nu$ be any optimal measure.
Every weak subsequential limit $\mu$ of $\mathcal L(T\mid M>u)$ along
$u\to\infty$ satisfies
\[
 k_\mu=k_\nu
\]
and is itself optimal.  Consequently, if the optimal measure $\nu$ is
unique, then, as $u\to\infty$, weakly in $\cM(K)$
% , equivalently against test
% functions in $C(K)$,
\begin{equation}
 \mathcal L(T\mid M>u)\Longrightarrow\nu.
 \label{eq:location}
\end{equation}
\end{theorem}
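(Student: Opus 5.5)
The plan is to obtain $k_\mu=k_\nu$ directly from a Gaussian integration-by-parts identity. Optimality of $\mu$ then follows by a short algebraic argument. The identity comes from applying Gaussian integration by parts (Malliavin calculus on $C(K)$) to the Lipschitz functional $f(x)=(\min_K x-u)_+$. Its derivative in direction $\delta_s$ is $\mathbf 1\{\min x>u\}$ at $s=T$, because the minimizer is almost surely unique on $\{M>u\}$. This should give, for every $t\in K$,
\[
 \Ee\bigl\{X(t)\,(M-u)_+\bigr\}=\Ee\bigl\{R(t,T)\,;\,M>u\bigr\}
 =\Pp\{M>u\}\,k_{\mu_u}(t),\qquad \mu_u=\mathcal L(T\mid M>u).
\]
To justify it rigorously I would approximate $f$ by $(\min_{t\in K_n}x(t)-u)_+$ over finite sets $K_n\uparrow$, dense in $K$. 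For each finite $K_n$ this is the finite-dimensional Stein identity. The limit passage uses continuity of $R$ and dominated convergence, with $T_n\to T$ along the a.s.\ unique minimizer. If uniqueness is not available, I would instead use the fact that every limit point of $T_n$ lies in $\argmin X$, and handle the selection issue through the $\mu_u$-integrals only.

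The identity gives $k_{\mu_u}(t)=\Ee\{X(t)\,u(M-u)\mid M>u\}/u$. By Theorem~\ref{thm:overshoot}, $u(M-u)$ converges conditionally in law to $\sigma_*^2\xi_1$. So it suffices to show that conditionally on $M>u$,
\[
 \frac{X(t)}{u}\longrightarrow\frac{k_\nu(t)}{\sigma_*^2}\quad\text{uniformly in }t,
\]
in conditional probability, together with uniform integrability of $\|X\|_\infty(M-u)_+$ under $\Pp\{\cdot\mid M>u\}$. Then $k_{\mu_u}\to k_\nu$ uniformly, since $\Ee\{\sigma_*^2\xi_1\}\cdot k_\nu/\sigma_*^2=k_\nu$. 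For the concentration step I use the Cameron--Martin shift $h_u=u\,k_\nu/\sigma_*^2$. This lies in the RKHS with $\|h_u\|^2_{\cH}=u^2/\sigma_*^2$ and $\langle h_u,X\rangle=uZ/\sigma_*^2$, where $Z=\int_KX\,d\nu$. It gives
\[
 \Pp\{X\in A\}=e^{-u^2/(2\sigma_*^2)}\,\Ee\bigl\{e^{-uZ/\sigma_*^2}\mathbf 1\{X+h_u\in A\}\bigr\}.
\]
On $\{\min(X+h_u)>u\}$ we have $Z+u=\int(X+h_u)d\nu>u$, so $Z>0$ and the weight is at most one. The event $\{\|X\|_\infty>\delta u\}$ then has probability at most $e^{-u^2/(2\sigma_*^2)}e^{-c_\delta u^2}$ by Borell--TIS. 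Uniform integrability follows from the same bound combined with Fernique.

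The main obstacle is the matching lower bound $\Pp\{M>u\}\geq e^{-u^2/(2\sigma_*^2)-o(u^2)}$, which is needed to compare with that $e^{-c_\delta u^2}$ error. I would take it from the logarithmic tail asymptotics used for Theorem~\ref{thm:overshoot} (or from the companion \cite{debickiHashorvaNovikov2026}). If a self-contained route is needed, I would use the large-deviation lower bound for $X/u$ at the RKHS element $k_\nu/\sigma_*^2$, perturbed by a small positive multiple of an element achieving the strict inequality. Such an element exists by Lemma~\ref{lem:positivity}(iv).

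Given $k_{\mu_u}\to k_\nu$ uniformly, let $\mu$ be a weak subsequential limit. Continuity of $R$ on the compact $K^2$ gives $k_{\mu_u}\to k_\mu$ pointwise along the subsequence, so $k_\mu=k_\nu$. Then $\mathcal E_R(\mu-\nu)=\int k_{\mu-\nu}\,d(\mu-\nu)=0$. Expanding,
\[
 \mathcal E_R(\mu)=\mathcal E_R(\nu)+2\int k_\nu\,d(\mu-\nu)+\mathcal E_R(\mu-\nu)=\sigma_*^2+2\bigl(\mathcal E_R(\mu)-\sigma_*^2\bigr),
\]
using $\int k_\nu\,d\mu=\int k_\mu\,d\mu=\mathcal E_R(\mu)$ and $\int k_\nu\,d\nu=\sigma_*^2$. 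Hence $\mathcal E_R(\mu)=\sigma_*^2$, so $\mu$ is optimal. If the optimal measure is unique, every subsequential limit equals $\nu$. Since $\cM(K)$ is weakly compact, this yields \eqref{eq:location}.
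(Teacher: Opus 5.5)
\textbf{Main gap: tied minimizers.} You need the identity $\Ee\{X(t)(M-u)_+\}=\Ee\{R(t,T);M>u\}$ for an \emph{arbitrary} measurable selection $T$. You justify it by almost-sure uniqueness of the minimizer on $\{M>u\}$, and this is false in general. For $X(t)=Y$ on $[0,1]$ (the paper's remark after Theorem~\ref{thm:location}) the sample argmin is all of $K$. More generally, ties can occur with positive probability whenever $X(s)=X(s')$ a.s.\ for some $s\neq s'$.

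What the identity actually needs is that $R(t,\cdot)$ be almost surely constant on $\argmin_K X$, which is the paper's tie lemma \eqref{eq:tie-lemma}. Your fallback does not supply this. Cluster points of the grid minimizers $T_n$ lie in $\argmin_K X$, but they need not be your $T$. So the limit passage only gives
\[
 \Ee\Bigl\{\min_{s\in\argmin_K X}R(t,s);M>u\Bigr\}
 \le\Ee\{X(t)(M-u)_+\}
 \le\Ee\Bigl\{\max_{s\in\argmin_K X}R(t,s);M>u\Bigr\},
\]
which does not determine $k_{\mu_u}(t)$ for the given selection. Even this sandwich requires $\Pp\{M=u\}=0$, which you do not address.

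A clean repair is the paper's Cameron--Martin argument. The map $\varepsilon\mapsto\Ee\{(\min_K(X+\varepsilon R(t,\cdot))-u)_+\}=\Ee\{(M-u)_+e^{\varepsilon X(t)-\varepsilon^2R(t,t)/2}\}$ is differentiable at $0$ with derivative $\Ee\{X(t)(M-u)_+\}$. On the other hand, Danskin's formula and difference quotients bounded by $\sup_s|R(t,s)|$ show that its right and left derivatives are the two outer expressions above. Their equality forces $R(t,\cdot)$ to be constant on $\argmin_K X$ a.s.\ on $\{M>u\}$, and yields the identity for every selection.

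\textbf{Secondary gap: uniform integrability.} The Cameron--Martin bound together with Borell--TIS and Fernique correctly disposes of the event $\{\|X/u-m\|_\infty>\delta\}$. On its complement, however, you still need uniform integrability of $u(M-u)_+$ under $\Pp\{\cdot\mid M>u\}$. This does not follow from Theorem~\ref{thm:overshoot}, which gives only convergence in law. Nor does it follow from the Cameron--Martin bound, which compares with $e^{-u^2/(2\sigma_*^2)}$ rather than with $q(u)$; note that $q(u)e^{u^2/(2\sigma_*^2)}$ may tend to $0$, like $u^{-|I|}$ in \eqref{eq:intro-finite-tail}. It does follow from convexity of $\psi=-\log q$, established in the proof of Theorem~\ref{thm:overshoot}: the three-slope inequality with the points $u/2<u<u+x/u$ gives $q(u+x/u)/q(u)\le e^{-cx}$ for all $x\ge0$ and all large $u$.

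\textbf{Comparison with the paper.} With both repairs your route is valid, but it differs from the paper's. You evaluate $k_{\mu_u}(t)=\Ee\{X(t)(M-u)_+\}/q(u)$ asymptotically through the overshoot limit and concentration. The paper does not use Theorem~\ref{thm:overshoot} here at all. Taking $h=k_\nu$ gives $u\Ee\{(M-u)_+\}\le\|k_\nu\|_\infty q(u)$. The splitting $h=\alpha k_\nu+g$, with $W(g)$ independent of $Y$, makes the $g$-part $o(q(u))$, and $m(T)\to1$ handles the rest. Your final algebra showing $\mathcal E_R(\mu)=\sigma_*^2$ is correct.
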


\begin{remark}  Convergence of the conditional law
of a particular minimizer selection identifies its weak limit, but does
not imply uniqueness of the optimal covariance-energy measure.

Indeed, let $K=[0,1]$ and $X(t)=Y$ for every $t\in K$, where
$Y\sim N(0,1)$.  Then $R(s,t)=1$ for all $s,t\in K$, and therefore every
probability measure on $K$ is optimal.  The leftmost selector satisfies
\[
 \mathcal L(T\mid M>u)=\delta_0
\]
for every $u\in\mathbb R$, whereas the rightmost selector has law
$\delta_1$.  Thus convergence of a particular selector does not imply
uniqueness of the optimal measure, and the subsequential optimality
conclusion is the sharp selector-independent statement in this case.
\end{remark}

\begin{remark}
\label{rem:stationary-setting}

% Pointwise uniqueness of the sample minimizer is not needed.  For example,
% on $K=\{0,1,2\}$ let $X_0=X_1=Z+G$ and $X_2=Z$, with $Z,G$ independent
% standard normal variables.  The unique optimal measure is $\delta_2$,
% although the sample minimum is tied at $0$ and $1$ whenever $G<0$.
% As $u\to\infty$, the theorem nevertheless gives convergence of the
% leftmost minimizer to $\delta_2$ under the high-minimum conditioning.

In the stationary setting of \cite[Theorem~4.1]{wu2019high}, suppose
separately that the covariance-energy problem on $[0,L]$ has a unique
optimal measure $\nu$ and that $\sigma_*^2>0$.  Then
Theorem~\ref{thm:location} gives, for any measurable selection $T$ from
the sample minimizers,
\[
 \mathcal L\left(
 T\ \middle|\ \min_{0\leq t\leq L}X(t)>u
 \right)
 \Longrightarrow \nu,
 \qquad u\to\infty.
\] 
A convenient sufficient condition for both assumptions is that the support
of the spectral measure of $X$ contains a nonempty open interval; see
\cite[Proposition~2.1]{muirheadSevero2024percolation}.
\end{remark}

We next present three examples that illustrate our findings above.
 
\begin{example}[Stationary Ornstein--Uhlenbeck process]
Let $X$ be centred stationary with covariance
\[
 R(s,t)=e^{-\lambda|t-s|},\qquad s,t\in[0,L],\quad \lambda,L>0.
\]
Then the unique optimal measure and its energy are
\[
 \nu=\frac{\delta_0+\delta_L+\lambda\,dt}{2+\lambda L},
 \qquad
 \sigma_*^2=\frac{2}{2+\lambda L}.
\]
Here $dt$ denotes Lebesgue measure on $[0,L]$.
Indeed, for every $t\in[0,L]$ we have 
\[
 k_\nu(t)=\frac{e^{-\lambda t}+e^{-\lambda(L-t)}
 +\lambda\int_0^L e^{-\lambda|t-s|}\,ds}{2+\lambda L}
 =\frac{2}{2+\lambda L}.
\]
Thus $\nu$ is optimal, and it is unique since the kernel is integrally
strictly positive definite.  Consequently, as $u\to\infty$ the following weak convergences hold:
\[
 \mathcal L\bigl(u(M-u)\mid M>u\bigr)
 \Longrightarrow
 \mathcal L\left(\frac{2}{2+\lambda L}\xi_1\right),
 \qquad
 \mathcal L(T\mid M>u)\Longrightarrow
 \frac{\delta_0+\delta_L+\lambda\,dt}{2+\lambda L}.
\]
\end{example}

\begin{example}[Fractional Brownian motion]
\label{ex:fbm}
Let $B_H$ be a standard fractional Brownian motion on $[a,b]$, where
$0<a<b<\infty$ and $H\in(0,1)$, with covariance
\[
 R_H(s,t)=\frac12\bigl(s^{2H}+t^{2H}-|t-s|^{2H}\bigr), \qquad s,t\ge 0.
\]
The unique optimal measure $\nu_H$ is identified in
Lemma~\ref{lem:fbm-optimal} below.  If $H\geq1/2$, then
\[
 \nu_H=\delta_a,\qquad \sigma_H^2=a^{2H}.
\]
For $x,y>0$ and $z\in[0,1]$, write
\[
 \mathrm B(x,y)=\int_0^1v^{x-1}(1-v)^{y-1}\,dv,\qquad
 \mathrm B_z(x,y)=\int_0^zv^{x-1}(1-v)^{y-1}\,dv,
\]
and $\mathrm I_z(x,y)=\mathrm B_z(x,y)/\mathrm B(x,y)$ for the beta,
incomplete beta, and regularised incomplete beta functions, respectively.
If $0<H<1/2$, put $\kappa=1/2-H$ and
\[
 A_H=\frac{(b-a)^{1-2\kappa}}{\mathrm B(\kappa,\kappa)}
    \mathrm I_{a/b}(1-\kappa,2\kappa),\qquad
 D_H=\frac{\sin(\pi \kappa)}{\pi}\frac{a^{1-\kappa}}{b^\kappa}.
\]
Then
\[
 \nu_H(dt)=\rho_H(t)\,dt,
\]
where
\[
 \rho_H(t)=(t-a)^{\kappa-1}(b-t)^{\kappa-1}
 \left(A_H+D_H\frac{b-t}{t}\right),\qquad a<t<b.
\]
Moreover,
\[
 \inf_{a<t<b}\rho_H(t)>0,\qquad \supp(\nu_H)=[a,b],
\]
and its potential is constant:
\[
 \sigma_H^2=\int_a^bR_H(s,t)\rho_H(t)\,dt>0,
 \qquad s\in[a,b].
\]
Writing $M_H=\min_{a\leq t\leq b}B_H(t)$ and taking any measurable
sample minimizer $T_H$, Theorems~\ref{thm:overshoot}
and~\ref{thm:location} give, as $u\to\infty$,
\[
 \mathcal L\bigl(u(M_H-u)\mid M_H>u\bigr)
 \Longrightarrow \mathcal L(\sigma_H^2\xi_1),
 \qquad
 \mathcal L(T_H\mid M_H>u)\Longrightarrow\nu_H.
\]
The condition $a>0$ is essential: if $0\in K$, then $B_H(0)=0$,
$\sigma_*^2=0$, and the high-minimum event is empty for $u\geq0$.
\end{example}

\begin{example}[Fractional Brownian sheet]
Let $\mathbb R_+=[0,\infty)$, and let
$W_H=\{W_H(\boldsymbol t),\boldsymbol t\in\mathbb R_+^2\}$ be a
continuous fractional Brownian sheet with Hurst index $H\in(0,1)$, so that
\[
 \Ee\bigl\{W_H(\boldsymbol s)W_H(\boldsymbol t)\bigr\}
 =R_H(s_1,t_1)R_H(s_2,t_2),
 \qquad \boldsymbol s,\boldsymbol t\in\mathbb R_+^2.
\]
Consider $W_H$ on $K=[a,b]^2$, where $0<a<b<\infty$, and let
$\nu_H$ and $\sigma_H^2$ be as in Example~\ref{ex:fbm}.  Then the unique
optimal measure and its energy are
\[
 \nu_H^{\otimes2}=\nu_H\otimes\nu_H,\qquad
 \sigma_{H,\square}^2=(\sigma_H^2)^2.
\]
Indeed, the energy and potential factorise:
\[
 \mathcal E_R(\nu_H^{\otimes2})=(\sigma_H^2)^2,
 \qquad
 k_{\nu_H^{\otimes2}}(\boldsymbol t)
 =k_{\nu_H}(t_1)k_{\nu_H}(t_2)
 \geq(\sigma_H^2)^2.
\]
Thus Theorem~\ref{thm:optimal-measure-criterion} gives optimality.
The product kernel is also integrally strictly positive definite.  Indeed,
if a finite signed measure $\eta$ on $[a,b]^2$ has zero product energy,
then its product potential vanishes.  Applying the last assertion of
Lemma~\ref{lem:fbm-optimal} first in the first coordinate gives
\[
 \int_{A\times[a,b]}R_H(s_2,t_2)\,\eta(d\boldsymbol s)=0
\]
for every Borel set $A\subset[a,b]$ and every $t_2\in[a,b]$; applying it
again in the second coordinate gives $\eta(A\times B)=0$ for all Borel
rectangles, and hence $\eta=0$.  Therefore the optimal measure is unique.

Writing
\[
 M_{H,\square}=\min_{\boldsymbol t\in[a,b]^2}W_H(\boldsymbol t)
\]
and taking any measurable sample minimizer $T_{H,\square}$, we obtain
\[
 \mathcal L\bigl(u(M_{H,\square}-u)\mid M_{H,\square}>u\bigr)
 \Longrightarrow
 \mathcal L\bigl((\sigma_H^2)^2\xi_1\bigr),
 \qquad
 \mathcal L(T_{H,\square}\mid M_{H,\square}>u)
 \Longrightarrow\nu_H^{\otimes2}.
\]
For $H\geq1/2$ the limiting location measure is $\delta_{(a,a)}$,
whereas for $0<H<1/2$ it has density
$\rho_H(t_1)\rho_H(t_2)$ on $(a,b)^2$.
\end{example}

\section{Proofs}
\label{sec:proofs}
We first present the RKHS identities for the covariance energy and
potential defined above.  Let $\cH$ be the RKHS of the covariance kernel
$R$, with inner product $\langle\cdot,\cdot\rangle_{\cH}$ and norm
$\|\cdot\|_{\cH}$.  Since
\[
 \|R(s,\cdot)-R(t,\cdot)\|_{\cH}^2
 =R(s,s)+R(t,t)-2R(s,t), \qquad s,t \in K
\]
the map $t\mapsto R(t,\cdot)$ is continuous from $K$ into $\cH$.
Consequently, for every finite signed measure $\eta$ on $K$, the
Bochner integral
\[
 k_\eta=\int_KR(t,\cdot)\,\eta(dt)
\]
belongs to $\cH$.  Define the isonormal map first on kernel sums by
\[
 W\left(\sum_{j=1}^nc_jR(t_j,\cdot)\right)
 =\sum_{j=1}^nc_jX(t_j)
\]
and extend it by isometry to $\cH$.  Approximation by kernel sums and
Fubini's theorem, justified by Fernique integrability, then give
\begin{equation}
 \begin{split}
 W(k_\eta)&=\int_KX(t)\,\eta(dt),\\
 \langle h,k_\eta\rangle_{\cH}&=\int_Kh(t)\,\eta(dt),\qquad h\in\cH,\\
 \|k_\eta\|_{\cH}^2
 &=\mathcal E_R(\eta)
 =\Ee\left\{\left(\int_KX(t)\,\eta(dt)\right)^2\right\}\geq0.
 \end{split}
 \label{eq:rkhs-measure-identities}
\end{equation}
In particular, every $h\in\cH$ is continuous, hence bounded, on $K$,
because $h(t)=\langle h,R(t,\cdot)\rangle_{\cH}$.

We shall use the following standard optimality criterion for minimum-energy
probability measures; see   \cite[Theorem~4.3]{adler2014existence}, \cite[Theorem~2.1]{selk2024large} and
\cite[Theorem~1.1]{pronzato2021minimum}.  We include its short proof for
completeness.

\begin{theorem}
\label{thm:optimal-measure-criterion} The probability measure
\(\mu\in\cM(K)\) is optimal
if and only if

\[
        k_\mu(t)\ge \mathcal E_R(\mu),
        \qquad t\in K.
\]

Moreover, if \(\mu\) is optimal, then
\[
        k_\mu(t)=\mathcal E_R(\mu),
        \qquad t\in\supp(\mu).
\]

In particular, if \(k_\mu\) is constant on \(K\), then \(\mu\) is
optimal. If, in addition,
\[
        \mathcal E_R(\eta)>0
\]
for every nonzero finite signed measure \(\eta\) on \(K\) satisfying
\(\eta(K)=0\), then the optimal measure is unique.

Consequently, if \(\supp(\mu)=K\), then \(\mu\) is optimal if and only if
\(k_\mu\) is constant on \(K\).
\end{theorem}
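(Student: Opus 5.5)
The plan is to exploit convexity of $\mathcal E_R$ on $\cM(K)$ together with the RKHS identities \eqref{eq:rkhs-measure-identities}. For any $\mu,\lambda\in\cM(K)$ and $\varepsilon\in[0,1]$, put $\mu_\varepsilon=(1-\varepsilon)\mu+\varepsilon\lambda$. By bilinearity,
\[
 \mathcal E_R(\mu_\varepsilon)=\mathcal E_R(\mu)+2\varepsilon\Bigl(\int_K k_\mu\,d\lambda-\mathcal E_R(\mu)\Bigr)+\varepsilon^2\mathcal E_R(\lambda-\mu).
\]
Here $\mathcal E_R(\lambda-\mu)=\|k_\lambda-k_\mu\|_{\cH}^2\ge0$.

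\emph{Necessity.} Suppose $\mu$ is optimal. Then $\mathcal E_R(\mu_\varepsilon)\ge\mathcal E_R(\mu)$ for small $\varepsilon>0$. Dividing by $\varepsilon$ and letting $\varepsilon\downarrow0$ gives $\int k_\mu\,d\lambda\ge\mathcal E_R(\mu)$. Choosing $\lambda=\delta_t$ yields $k_\mu(t)\ge\mathcal E_R(\mu)$ for every $t$.

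\emph{Sufficiency.} Suppose $k_\mu\ge\mathcal E_R(\mu)$ on $K$. For any $\lambda$ the expansion at $\varepsilon=1$ gives
\[
 \mathcal E_R(\lambda)=\mathcal E_R(\mu)+2\Bigl(\int k_\mu\,d\lambda-\mathcal E_R(\mu)\Bigr)+\mathcal E_R(\lambda-\mu)\ge\mathcal E_R(\mu),
\]
so $\mu$ is optimal.

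\emph{Equality on the support.} Since $\int k_\mu\,d\mu=\mathcal E_R(\mu)$ and $k_\mu-\mathcal E_R(\mu)\ge0$ is continuous, this function vanishes $\mu$-a.e. By continuity it then vanishes on $\supp(\mu)$. This continuity step is the only point needing care: $k_\mu$ is continuous because $R$ is continuous on the compact set $K^2$, or equivalently because $k_\mu\in\cH$.

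\emph{Constant potential.} If $k_\mu\equiv c$, integrating against $\mu$ gives $c=\mathcal E_R(\mu)$, so the criterion holds and $\mu$ is optimal.

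\emph{Uniqueness.} Let $\mu$ and $\lambda$ both be optimal, with common energy $\sigma_*^2$. The displayed identity and the criterion $\int k_\mu\,d\lambda\ge\sigma_*^2$ give $\mathcal E_R(\lambda-\mu)\le0$, hence $\mathcal E_R(\lambda-\mu)=0$. Since $\eta=\lambda-\mu$ satisfies $\eta(K)=0$, the hypothesis forces $\eta=0$, that is, $\lambda=\mu$.

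\emph{Full support.} If $\supp(\mu)=K$ and $\mu$ is optimal, the equality-on-support statement makes $k_\mu$ constant on $K$. The converse is the constant-potential case already shown.

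I do not expect a genuine obstacle; the only subtle points are the continuity of $k_\mu$ and the nonnegativity $\mathcal E_R(\lambda-\mu)\ge0$, both supplied by \eqref{eq:rkhs-measure-identities}.
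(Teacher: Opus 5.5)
Your proposal is correct and follows the paper's proof essentially step by step: the same quadratic expansion along the segment from $\mu$ toward another measure, test measures $\delta_t$ for necessity, $\varepsilon=1$ for sufficiency, and continuity of $k_\mu$ to pass from $\mu$-a.e.\ equality to equality on $\supp(\mu)$. For uniqueness, you substitute two optimizers into the $\varepsilon=1$ expansion to force $\mathcal E_R(\lambda-\mu)=0$; this is just an explicit version of the paper's appeal to strict convexity of $\mathcal E_R$ on $\cM(K)$.
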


\begin{proof}
For \(\nu\in\cM(K)\), put \(\eta=\nu-\mu\).  For
\(0\leq\varepsilon\leq1\),
\begin{align*}
 \mathcal E_R(\mu+\varepsilon\eta)
 &=\mathcal E_R(\mu)
   +2\varepsilon\left(\int_Kk_\mu(t)\,\nu(dt)
                         -\mathcal E_R(\mu)\right)
   +\varepsilon^2\mathcal E_R(\eta).
\end{align*}
If \(\mu\) is optimal, the right derivative at zero is nonnegative.
Taking \(\nu=\delta_t\) gives
\(k_\mu(t)\geq\mathcal E_R(\mu)\) for every \(t\in K\).  Since
\[
 \int_Kk_\mu(t)\,\mu(dt)=\mathcal E_R(\mu),
\]
equality holds \(\mu\)-almost everywhere and, by continuity, on
\(\supp(\mu)\).

Conversely, suppose that
\(k_\mu\geq\mathcal E_R(\mu)\) on \(K\).  The same expansion with
\(\varepsilon=1\) gives
\[
 \mathcal E_R(\nu)-\mathcal E_R(\mu)
 =2\left(\int_Kk_\mu(t)\,\nu(dt)-\mathcal E_R(\mu)\right)
   +\mathcal E_R(\nu-\mu)\geq0,
\]
and hence \(\mu\) is optimal.  If \(k_\mu\) is constant, its constant
value is \(\mathcal E_R(\mu)\), so the criterion applies.  Strict positivity
of \(\mathcal E_R\) on nonzero signed measures of total mass zero makes
\(\mathcal E_R\) strictly convex on \(\cM(K)\), and therefore the
optimal measure is unique.  The last assertion follows from the equality
on \(\supp(\mu)\).
\end{proof}

\begin{proof}[Proof of Lemma~\ref{lem:positivity}]
The implication $\Pp\{M>0\}>0\Rightarrow\sigma_*^2>0$ is noted after
(1.2) in \cite{wu2019high}; related RKHS and minimum-energy dual
formulations appear in \cite[Theorem~5.1]{adler2014existence}.

If $\mu_n\Rightarrow\mu$ weakly in $\cM(K)$, then
$\mu_n\otimes\mu_n\Rightarrow\mu\otimes\mu$.  Since $R$ is continuous
and bounded on the compact space $K^2$, it follows that
$\mathcal E_R(\mu_n)\to\mathcal E_R(\mu)$.  Thus weak compactness of
$\cM(K)$ gives (i)$\Leftrightarrow$(ii).  If (iii) holds, then for every
$\mu\in\cM(K)$ the centred Gaussian variable
$\int_KX(t)\mu(dt)$ is strictly positive with positive probability,
and hence its variance $\mathcal E_R(\mu)$ is positive.  Thus
(iii)$\Rightarrow$(i), while (iv)$\Rightarrow$(iii) is immediate.

It remains to prove (ii)$\Rightarrow$(iv).  Let $\nu$ be optimal and put
\[
 h=\frac{k_\nu}{\sigma_*^2}\in\cH.
\]
Theorem~\ref{thm:optimal-measure-criterion} gives $h\geq1$ on $K$.
Choose $L>0$ such that $\Pp\{\|X\|_\infty<L\}>0$, and then choose
$r>\max\{0,u+L\}$.  On $\{\|X\|_\infty<L\}$,
\[
 \min_{t\in K}\{X(t)+rh(t)\}>r-L>u.
\]
The Cameron--Martin theorem says that the laws of $X+rh$ and $X$ are
equivalent.  Hence $\Pp\{M>u\}>0$, proving (iv).
\end{proof}

We now prove Theorems~\ref{thm:overshoot} and~\ref{thm:location}.  The
following regression estimates will be used in both proofs.

\subsection{Regression and logarithmic concentration}
\label{subsec:regr}

Fix an arbitrary optimal measure $\nu$.  The optimality criterion gives
\begin{equation}
 k_\nu(t)\geq\sigma_*^2 \quad(t\in K),
 \qquad
 k_\nu(t)=\sigma_*^2 \quad(t\in\supp(\nu)).
 \label{eq:frostman}
\end{equation}
Set
\[
 Y=\int_KX(t)\,\nu(dt),
 \qquad
 m(t)=\frac{k_\nu(t)}{\sigma_*^2},
 \qquad
 Z(t)=X(t)-m(t)Y.
\]
Then $Y\sim N(0,\sigma_*^2)$, $m\geq1$, and Gaussian regression gives
\begin{equation}
 X(t)=m(t)Y+Z(t),
 \label{eq:regression}
\end{equation}
where $Z$ is a centred continuous Gaussian process independent of $Y$.

The law of $M$ is absolutely continuous.  For $z\in C(K)$, write
\[
 f_z(y)=\min_{t\in K}\{m(t)y+z(t)\},\qquad y\in\mathbb R.
\]
The bound
\[
 |f_z(y)-f_{\widetilde z}(\widetilde y)|
 \leq \|z-\widetilde z\|_\infty
      +\|m\|_\infty|y-\widetilde y|
\]
shows that $(z,y)\mapsto f_z(y)$ is jointly continuous and hence Borel
measurable.  Since $C(K)$ is Polish, regular conditional distributions
given $Z$ exist; independence makes the conditional law of $Y$ its
original Gaussian law.  Now, conditionally on $Z$, since
$1\leq m\leq\|m\|_\infty$, for $y_2>y_1$ we have
\[
 y_2-y_1\leq f_Z(y_2)-f_Z(y_1)
 \leq\|m\|_\infty(y_2-y_1).
\]
Thus, for every fixed realisation of $Z$, the map $f_Z$ is a strictly
increasing bi-Lipschitz homeomorphism of $\mathbb R$, and its inverse is
$1$-Lipschitz.  If $B\subset\mathbb R$ has Lebesgue measure zero, then
$f_Z^{-1}(B)$ also has Lebesgue measure zero.  Since $Y$ has a density
and is independent of $Z$,
\[
 \Pp\{M\in B\mid Z\}
 =\Pp\{Y\in f_Z^{-1}(B)\mid Z\}=0
\]
almost surely.  Hence the law of $M=f_Z(Y)$ is absolutely continuous,
and in particular atomless.

The regression also gives a short proof of the logarithmic tail estimate.
Since $M>u$ implies $Y>u$, while for every $\delta>0$ and all
sufficiently large $u$,
\[
 \{Y>(1+2\delta)u,\ \|Z\|_\infty<\delta u\}
 \subseteq\{M>u\},
\]
independence yields
\begin{equation}
 \lim_{u\to\infty}u^{-2}\log\Pp\{M>u\}
 =-\frac1{2\sigma_*^2},
 \label{eq:ldp}
\end{equation}
which is also obtained in \cite[Theorem~5.1]{adler2014existence}.

It follows as well that, conditionally on $M>u$,
\begin{equation}
 \frac Yu\longrightarrow1,
 \qquad
 \frac{\|Z\|_\infty}{u}\longrightarrow0
 \quad\text{in probability}
 \label{eq:concentration}
\end{equation}
as $u\to\infty$. 
Indeed, on the conditioning event $Y/u>1$, while for every
$\varepsilon>0$,
\[
 \Pp\{Y>(1+\varepsilon)u\mid M>u\}
 \leq \frac{\Pp\{Y>(1+\varepsilon)u\}}{\Pp\{M>u\}}
 \longrightarrow0
\]
as $u\to\infty$ exponentially fast by the Gaussian tail and \eqref{eq:ldp}.
For the second assertion, put
$\sigma_Z^2=\sup_{t\in K}\Var(Z(t))$.  If $\sigma_Z^2=0$, then
$Z\equiv0$ almost surely and the assertion is immediate.  Otherwise,
Borell--TIS gives, for every $\varepsilon>0$,
\[
 \log\Pp\{\|Z\|_\infty>\varepsilon u\}
 \leq-\frac{\varepsilon^2u^2}{2\sigma_Z^2}+o(u^2).
\]
Since $M>u\Rightarrow Y>u$, independence gives
\[
 \frac{\Pp\{\|Z\|_\infty>\varepsilon u,M>u\}}{\Pp\{M>u\}}
 \leq
 \Pp\{\|Z\|_\infty>\varepsilon u\}
 \frac{\Pp\{Y>u\}}{\Pp\{M>u\}}
 \leq e^{-c_\varepsilon u^2+o(u^2)},
\]
by the preceding bound and \eqref{eq:ldp}, where
$c_\varepsilon=\varepsilon^2/(2\sigma_Z^2)>0$.  For $u>0$, the event
$\{M>u\}$ implies $Y>u>0$.
Since $M=X(T)\leq Y$, \eqref{eq:regression} gives
\[
 0\leq (m(T)-1)Y
 \leq -Z(T)
 \leq \|Z\|_\infty,
\]
Consequently, as $u\to\infty$
\begin{equation}
 m(T)\longrightarrow1
 \quad\text{in probability under }\Pp\{\,\cdot\mid M>u\}.
 \label{eq:contact}
\end{equation}

\subsection{The overshoot}

\begin{proof}[Proof of Theorem~\ref{thm:overshoot}]
The key observation replacing all smooth local
analysis is that $q$ is log-concave.  Indeed, let $\gamma$ be the Gaussian
law of $X$ on $C(K)$ and define the closed convex sets
\[
 \mathcal A_u=\left\{x\in C(K):\min_{t\in K}x(t)\geq u\right\},
 \qquad u\in\mathbb R.
\]
Every Gaussian measure on a separable Banach space is log-concave, and
\[
 \lambda\mathcal A_u+(1-\lambda)\mathcal A_v
 \subseteq\mathcal A_{\lambda u+(1-\lambda)v},\qquad 0\leq\lambda\leq1.
\]
Consequently, $u\mapsto\gamma(\mathcal A_u)=\Pp\{M\geq u\}$ is log-concave.  The
law of $M$ is atomless, so this function equals $q(u)=\Pp\{M>u\}$.  This is also the
Gaussian-measure form of the Pr\'ekopa theorem~\cite{Prekopa1973}.

For completeness, the same fact can be verified by finite-dimensional
approximation, including the degenerate case.  Choose a dense sequence
$(t_j)_{j\geq1}$ in $K$ and put
\[
 q_n(u)=\Pp\{X(t_j)\geq u,\ 1\leq j\leq n\}.
\]
Let $r_n$ be the rank of the covariance matrix of
$(X(t_1),\ldots,X(t_n))$.  Choose $B_n\in\mathbb R^{n\times r_n}$ so
that this vector has the same law as $B_n\boldsymbol G$, where
$\boldsymbol G$ is standard Gaussian in $\mathbb R^{r_n}$; the rank-zero
case is immediate.  With $\mathbf1_A$ denoting the indicator of $A$, the
function
\[
 (\boldsymbol z,u)\longmapsto
 \mathbf1_{\{B_n\boldsymbol z\geq u\boldsymbol1\}}
 \exp\{-\|\boldsymbol z\|^2/2\}
\]
is log-concave because its support is convex.  The Pr\'ekopa marginal
theorem therefore shows that each $q_n$ is log-concave.  The defining
events decrease with $n$, and path continuity gives
$\inf_{j\geq1}X(t_j)=M$.  Hence
\[
 q_n(u)\downarrow\Pp\{M\geq u\}=q(u)
\]
and the defining log-concavity inequality passes to the limit.

Lemma~\ref{lem:positivity} gives $q(u)>0$ for every $u\in\mathbb R$.

Set $\psi(u)=-\log q(u)$.  Thus $\psi$ is convex and \eqref{eq:ldp}
states that, as $u\to\infty$
\begin{equation}
 \frac{\psi(u)}{u^2}\longrightarrow
 c:=\frac1{2\sigma_*^2}.
 \label{eq:psi-rate}
\end{equation}
For completeness, convexity alone upgrades this quadratic rate to the
required local ratio.  Fix $x>0$ and $\varepsilon\in(0,1)$.  With
$\Delta_u=x/u$, whenever $u\geq\sqrt{x/\varepsilon}$ we have
$\Delta_u\leq\varepsilon u$, so the monotonicity of convex secant slopes
gives
\[
 \frac{\psi(u)-\psi((1-\varepsilon)u)}{\varepsilon u}
 \leq \frac{\psi(u+\Delta_u)-\psi(u)}{\Delta_u}
 \leq \frac{\psi((1+\varepsilon)u)-\psi(u)}{\varepsilon u}.
\]
Multiplying by $\Delta_u=x/u$, using \eqref{eq:psi-rate}, and first letting
$u\to\infty$ gives
\[
 c(2-\varepsilon)x
 \leq\liminf_{u\to\infty}\{\psi(u+x/u)-\psi(u)\}
 \leq\limsup_{u\to\infty}\{\psi(u+x/u)-\psi(u)\}
 \leq c(2+\varepsilon)x.
\]
Now let $\varepsilon\downarrow0$.  Thus, for every $x\geq0$, as $u\to\infty$
\begin{equation}
 \psi\left(u+\frac xu\right)-\psi(u)
 \longrightarrow2cx=\frac{x}{\sigma_*^2}.
 \label{eq:local-tail}
\end{equation}
Consequently, as $u\to\infty$,
\[
 \Pp\{u(M-u)>x\mid M>u\}
 =\frac{q(u+x/u)}{q(u)}
 \longrightarrow e^{-x/\sigma_*^2}.
\]
For $x<0$, both the conditional survival function (for all sufficiently
large $u$) and that of $\sigma_*^2\xi_1$ equal one;
hence the convergence for $x\geq0$ proves the asserted weak convergence establishing the claim.  
\end{proof}

\subsection{A Cameron--Martin tie lemma}

We shall use the following observation: for every fixed $h\in\cH$,
\begin{equation}
 h \text{ is constant on }\argmin_{t\in K}X(t)
 \quad\text{almost surely}.
 \label{eq:tie-lemma}
\end{equation}
To prove it, for $x\in C(K)$ define
\[
 \phi_x(r)=\min_{t\in K}\{x(t)+r h(t)\},\qquad r\in\mathbb R.
\]
The map $(x,r)\mapsto\phi_x(r)$ is jointly continuous, and for each $x$
the function of $r$ is concave and Lipschitz.  Danskin's
formula~\cite{Danskin1966} gives
\[
 \phi'_{x,+}(r)
 =\min_{t\in\argmin_{s\in K}\{x(s)+rh(s)\}}h(t),
 \qquad
 \phi'_{x,-}(r)
 =\max_{t\in\argmin_{s\in K}\{x(s)+rh(s)\}}h(t).
\]
These formulas are elementary in the present affine perturbation: for
$s>0$, comparison with minimizers at $r$ and at $r+s$ bounds the
difference quotient of $\phi_x$ between the corresponding values of
$h$, and compactness lets $s\downarrow0$; the left derivative is handled
similarly.
Thus $h$ is nonconstant on the minimizer set of $x+rh$ precisely when
$\phi_x$ is not differentiable at $r$.  For each $x$ this happens at
most countably many $r$.  The corresponding set of pairs
$(x,r)\in C(K)\times\mathbb R$ is Borel: the two one-sided derivatives are limits
of measurable rational difference quotients.  Fubini's theorem therefore
shows that
\[
 \Pp\left\{h\text{ is nonconstant on }
 \argmin_{t\in K}\{X(t)+rh(t)\}\right\}=0
\]
for almost every $r$.  But the Cameron--Martin theorem says that the laws
of $X+rh$ and $X$ are equivalent for every $r$.  Choosing one of the
almost-everywhere good values of $r$, equivalence implies that the same
bad set has probability zero under the law of $X$.  This proves
\eqref{eq:tie-lemma}.  The null set may depend on $h$, which is all that
is needed below.

There is also a classical Lipschitz-functional proof.  Let $B$ be the
closed linear support of the law of $X$ in $C(K)$ and define
\[
 \Phi(x)=\min_{t\in K}x(t),\qquad x\in B.
\]
The functional $\Phi$ is $1$-Lipschitz.  The space $B$ is separable and
the law of $X$ is nondegenerate on $B$.  Hence the Gaussian Rademacher
theorem  \cite[Theorem~6]{Phelps1978} makes $\Phi$
G\^ateaux differentiable almost surely.  Since $\cH\subset B$, Danskin's
formula then gives \eqref{eq:tie-lemma}, in fact on one common
full-measure set for all $h\in\cH$.

If one assumes the stronger condition
\[
 \Var(X(s)-X(t))>0 \qquad(s\ne t),
\]
then almost-sure uniqueness of the sample minimizer follows directly from
the Gaussian argmax lemma  \cite[Lemma 2.6]{KimPollard1990},
so their result implies \eqref{eq:tie-lemma} under this condition.  It
does not cover the general version used here, because its hypothesis
excludes precisely the degenerate increments that can produce tied
minimizers.

\subsection{The location of the minimum}

\begin{proof}[Proof of Theorem~\ref{thm:location}]
Fix $h\in\cH$ and define
\[
 F_u(x)=\left(\min_{t\in K}x(t)-u\right)_+,
\]
where $a_+=\max\{a,0\}$ for $a\in\mathbb R$.
The Cameron--Martin formula gives
\[
 \Ee\{F_u(X+\varepsilon h)\}
 =\Ee\left\{F_u(X)
   \exp\left\{\varepsilon W(h)
   -\frac{\varepsilon^2}{2}\|h\|_{\cH}^2\right\}\right\}.
\]
By \eqref{eq:tie-lemma}, $h(T)$ has the same almost-sure value for every
measurable minimizer selection $T$.  Danskin's formula then shows that,
at a path with $M\ne u$, both one-sided derivatives of $F_u$ at
$\varepsilon=0$ equal
\[
 \mathbf1_{\{M>u\}}h(T),
\]
for any minimizer selection $T$.  The exceptional event $\{M=u\}$ has
probability zero by the atomlessness proved in \Cref{subsec:regr}.  The difference
quotients of $\varepsilon\mapsto F_u(X+\varepsilon h)$ on the left-hand
side are bounded by $\|h\|_\infty$, which is finite by
\eqref{eq:rkhs-measure-identities} and the paragraph following it.  On
the right-hand side,
$F_u(X)\in L^2$ by Fernique's theorem
and $W(h)$ has all exponential moments.  Dominated differentiation is
valid.  We use the notation
$\Ee\{U;A\}=\Ee\{U\mathbf1_A\}$ for an event $A$.  Thus
\begin{equation}
 \Ee\{h(T);M>u\}=\Ee\{W(h)(M-u)_+\}.
 \label{eq:ibp}
\end{equation}

The identities \eqref{eq:rkhs-measure-identities} with $\eta=\nu$ give
\[
 W(k_\nu)=Y,
 \qquad
 \|k_\nu\|_{\cH}^2=\sigma_*^2,
 \qquad
 \langle h,k_\nu\rangle_{\cH}=\int_Kh\,d\nu.
\]
Taking $h=k_\nu$ in \eqref{eq:ibp} gives the exact normalisation
\begin{equation}
 \Ee\{Y(M-u)_+\}=\Ee\{k_\nu(T);M>u\}.
 \label{eq:normalization}
\end{equation}

For general $h\in\cH$, write
\[
 h=\alpha k_\nu+g,
 \qquad
 \alpha=\frac{\int_Kh\,d\nu}{\sigma_*^2},
 \qquad
 \langle g,k_\nu\rangle_{\cH}=0.
\]
Then $G=W(g)$ is independent of $Y$, and \eqref{eq:ibp}--\eqref{eq:normalization}
give
\begin{equation}
 \Ee\{h(T);M>u\}
 =\alpha\Ee\{k_\nu(T);M>u\}+\Ee\{G(M-u)_+\}.
 \label{eq:decomposition}
\end{equation}
We claim that, as $u\to\infty$, the last term is $o(q(u)), q(u)=\Pp\{M>u\}$.  Put
$A_u=(M-u)_+$.  On all
outcomes
\[
 0\leq A_u\leq(Y-u)_+.
\]
On $\{M>u\}$ we have $Y>u$.  Equation
\eqref{eq:normalization} therefore gives, for $u>0$,
\[
 u\Ee\{A_u\}\leq\Ee\{YA_u\}
 \leq\|k_\nu\|_\infty q(u).
\]
For every $\varepsilon>0$, independence of $G$ and $Y$ yields
\[
 \Ee\{|G|A_u\}
 \leq\varepsilon u\Ee\{A_u\}
 +\Ee\{|G|;|G|>\varepsilon u\}\,\Ee\{(Y-u)_+\}.
\]
If $\tau^2=\Var G>0$, standard Gaussian tail estimates give
\[
 \log\Ee\{|G|;|G|>\varepsilon u\}
 \leq-\frac{\varepsilon^2u^2}{2\tau^2}+o(u^2),
 \qquad
 \log\Ee\{(Y-u)_+\}
 =-\frac{u^2}{2\sigma_*^2}+o(u^2).
\]
Thus \eqref{eq:ldp} makes the product in the second term $o(q(u))$; if
$\tau=0$, it vanishes.
It follows that
\[
 \limsup_{u\to\infty}
 \frac{|\Ee\{GA_u\}|}{q(u)}
 \leq\varepsilon\|k_\nu\|_\infty.
\]
Letting $\varepsilon\downarrow0$ proves the claim.

Since $m$ is continuous on the compact space $K$,
$1\leq m(T)\leq\|m\|_\infty$.  Thus \eqref{eq:contact} and uniform
boundedness imply convergence of the conditional expectations:
\[
 \Ee\{m(T)\mid M>u\}\longrightarrow1, \qquad u\to\infty.
\]
Because $k_\nu(T)=\sigma_*^2m(T)$, dividing
\eqref{eq:decomposition} by $q(u)$ and using the preceding display and
the $o(q(u))$ estimate now gives, as $u\to\infty$,
\begin{equation}
 \Ee\{h(T)\mid M>u\}\longrightarrow\int_Kh\,d\nu,
 \qquad h\in\cH.
 \label{eq:H-tests}
\end{equation}
The conditional laws of $T$ are tight because $K$ is compact.  If $\mu$
is a weak subsequential limit, \eqref{eq:H-tests} gives
\[
 \int_Kh\,d\mu=\int_Kh\,d\nu,
 \qquad h\in\cH.
\]
Taking $h=R(t,\cdot)\in\cH$ for each $t\in K$ shows that the kernel mean
embeddings agree: $k_\mu=k_\nu$.  Hence
\[
 \mathcal E_R(\mu)=\|k_\mu\|_{\cH}^2
       =\|k_\nu\|_{\cH}^2=\sigma_*^2,
\]
so $\mu$ is optimal.  If the optimiser is unique, $\mu=\nu$; compactness
then gives the full convergence \eqref{eq:location}.

Finally, the potential $k_\nu$ does not depend on which optimal measure
is chosen.  Indeed, if $\nu_1$ and $\nu_2$ are optimal, convexity and
minimality show that $(\nu_1+\nu_2)/2$ is also optimal.  The parallelogram
identity then gives
\[
 \mathcal E_R(\nu_1-\nu_2)
 =\|k_{\nu_1}-k_{\nu_2}\|_{\cH}^2=0,
\]
and hence $k_{\nu_1}=k_{\nu_2}$.  This also confirms that the conclusion
is independent of the initially fixed optimal measure $\nu$.
\end{proof}

The following optional subsection is not needed for the proofs of
\Cref{thm:overshoot,thm:location}.  It is included because it gives the
selection-independent Malliavin derivative $DM=R_T$, even when sample
minimizers are tied, and yields a reusable covariance-transfer identity
for functions of the minimum.  In particular, it recovers \eqref{eq:ibp}
by taking $f(x)=(x-u)_+$.

\subsection{A Malliavin location and covariance-transfer identity}

The classical Malliavin differentiability argument for extrema goes back
to Nualart and Vives~\cite[Theorems~1--2]{NualartVives1988}; see also
\cite[Propositions~2.1.10--2.1.11]{Nualart2006}.  In the unique-maximizer
fractional-Brownian setting, an explicit derivative at the maximizing
location appears in \cite[Lemma~3]{LanjriZadiNualart2003}.  The identity
below gives the corresponding RKHS location form in the present Gaussian
setting, including tied minimizers; Gaussian duality then gives the
covariance-transfer formula.  For $t\in K$, write
$R_t=R(t,\cdot)\in\cH$, so that $X(t)=W(R_t)$, and let $D$ denote the
Malliavin derivative associated with the isonormal process $W$.  We write
$\mathbb D^{1,2}$ for the corresponding first-order Malliavin--Sobolev
space.

\begin{proposition} 
\label{prop:malliavin-location}
Suppose that $\sigma_*^2>0$, and let $T$ be any measurable selection from
$\argmin_{t\in K}X(t)$.  Then $M\in\mathbb D^{1,2}$ and
\begin{equation}
 DM=R_T:=R(T,\cdot)\qquad\text{almost surely}.
 \label{eq:derivative-minimum}
\end{equation}
The random element $R_T\in\cH$ is independent of the chosen minimizer
selection.  Moreover, if $f:\mathbb R\to\mathbb R$ is locally absolutely
continuous and $f'\in L^\infty(\mathbb R)$, then
$f(M)\in\mathbb D^{1,2}$ and, for every $h\in\cH$,
\begin{equation}
 \Ee\bigl\{W(h)f(M)\bigr\}
 =\Ee\bigl\{f'(M)h(T)\bigr\}.
 \label{eq:malliavin-location}
\end{equation}
Here $f'$ may be chosen arbitrarily on its Lebesgue-null set of
non-differentiability.
\end{proposition}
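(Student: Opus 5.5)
Fix $h\in\cH$ and consider the map $\varepsilon\mapsto M(X+\varepsilon h)=\phi_X(\varepsilon)$, using the notation of the tie-lemma subsection.

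\textbf{Step 1 (directional derivative).} By Danskin's formula, $\phi_X$ has right derivative $\min_{\argmin X}h$ and left derivative $\max_{\argmin X}h$ at $\varepsilon=0$. By \eqref{eq:tie-lemma} these coincide almost surely, and both equal $h(T)=\langle h,R_T\rangle_{\cH}$ for any measurable selection $T$. Hence $M$ is almost surely differentiable in every fixed Cameron--Martin direction, with derivative $\langle R_T,h\rangle_{\cH}$.

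\textbf{Step 2 ($M\in\mathbb D^{1,2}$ and $DM=R_T$).} The functional $x\mapsto\min_t x(t)$ is $1$-Lipschitz in $\|\cdot\|_\infty$. Since $|h(t)|\le\|h\|_{\cH}R(t,t)^{1/2}$, we get
\[
|M(X+h)-M(X)|\le\sup_t R(t,t)^{1/2}\,\|h\|_{\cH},
\]
so $M$ is Lipschitz along $\cH$, and $M\in L^2$ by Fernique's theorem. I would then invoke the standard characterisation of $\mathbb D^{1,2}$ for $\cH$-Lipschitz functionals (Nualart, Proposition~1.2.4 / the Lipschitz-functional criterion, or the approximation argument of Nualart--Vives): such functionals lie in $\mathbb D^{1,2}$ with $\|DM\|_{\cH}$ bounded by the Lipschitz constant. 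Their Malliavin derivative is identified by the directional derivatives, namely $\langle DM,h\rangle_{\cH}=\lim_{\varepsilon\to0}\varepsilon^{-1}(M(X+\varepsilon h)-M(X))$ almost surely for each fixed $h$. This limit exists by Step~1.

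Concretely, I would use the Cameron--Martin formula as in the proof of \eqref{eq:ibp}. With dominated difference quotients this gives
\[
\Ee\{W(h)G\,M\}=\Ee\{G\,h(T)\}+\Ee\{M\,\langle DG,h\rangle_{\cH}\}
\]
for smooth cylindrical $G$, which is exactly the duality characterising $DM=R_T$ (closability of $D$). Taking a countable dense set of $h\in\cH$ (the space $\cH$ is separable since $K$ is compact and $R$ is continuous) identifies the $\cH$-valued random element $DM$ as $R_T$ almost surely. Measurability of $R_T$ follows from continuity of $t\mapsto R_t$. Independence of the selection follows because two selections $T,T'$ give $\langle R_T-R_{T'},h\rangle_{\cH}=0$ a.s. for each $h$ in the countable dense set, hence $R_T=R_{T'}$ almost surely.

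\textbf{Step 3 (chain rule and duality).} For $f$ locally absolutely continuous with bounded $f'$, $f$ is Lipschitz, so $f(M)\in L^2$. The chain rule for Lipschitz $f$ applies, since the law of $M$ is absolutely continuous by \Cref{subsec:regr}; hence the ambiguity of $f'$ on a Lebesgue-null set is invisible. It gives $f(M)\in\mathbb D^{1,2}$ with $Df(M)=f'(M)R_T$. The Gaussian duality $\Ee\{W(h)F\}=\Ee\{\langle DF,h\rangle_{\cH}\}$ then yields \eqref{eq:malliavin-location}. Alternatively, one can redo the Cameron--Martin differentiation directly with $F=f\circ\min$: Danskin together with \eqref{eq:tie-lemma} gives the derivative $f'(M)h(T)$ off $\{M\in N_f\}$, which is null by atomless absolute continuity, and the difference quotients are dominated by $\|f'\|_\infty\|h\|_\infty$.

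\textbf{Main obstacle.} The delicate point is Step~2: passing from almost-sure Gâteaux differentiability in each fixed direction (with direction-dependent null sets) to membership in $\mathbb D^{1,2}$ with an identified $\cH$-valued derivative. I would handle this through the Lipschitz criterion plus the integration-by-parts duality tested against a countable dense family, which avoids needing a common null set.
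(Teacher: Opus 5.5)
Your proposal is correct. Steps~1 and~3 match the paper, but Step~2 takes a different route.

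\textbf{The paper's route.} It obtains membership and identification in one stroke. With $M_n=\min_{j\le n}X(t_j)$ and $T_n=t_{J_n}$ it has $DM_n=R_{T_n}$. It then shows $R_{T_n}\to R_T$ almost surely, hence in $L^2(\Omega;\cH)$ by uniform boundedness. Closedness of $D$ then yields both $M\in\mathbb D^{1,2}$ and $DM=R_T$. That strong convergence needs selection independence on one full-measure event, simultaneously for all sample minimizers. The reason is that cluster points of $(T_n)$ are arbitrary minimizers, not values of a fixed measurable selection. The paper upgrades \eqref{eq:tie-lemma} to this form using a countable dense subset of $\cH$ and the bound $\sup_t|h(t)|\le\sup_t\|R_t\|_{\cH}\|h\|_{\cH}$.

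\textbf{Your route.} You decouple the two tasks. Membership is imported from the $\cH$-Lipschitz property. Identification comes from extending the Cameron--Martin differentiation behind \eqref{eq:ibp} to products $G\,M$ with $G=g(X(t_1),\dots,X(t_k))$. This gives $\Ee\{G\langle DM,h\rangle_{\cH}\}=\Ee\{G\,h(T)\}$, hence $\langle DM,h\rangle_{\cH}=h(T)$ almost surely for each fixed $h$. It needs \eqref{eq:tie-lemma} only with direction-dependent null sets and bypasses the cluster-point argument entirely. The price is the external membership result.

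On that point, be precise. The Lipschitz chain rule in Nualart's book treats Lipschitz functions of finitely many $\mathbb D^{1,2}$ variables, so by itself it does not place $M$ in $\mathbb D^{1,2}$. You should combine it with the weak-closure property of $D$: if $F_n\to F$ in $L^2$ and $\sup_n\Ee\|DF_n\|_{\cH}^2<\infty$, then $F\in\mathbb D^{1,2}$. Apply this to $M_n$, whose derivatives are bounded by $\sup_t\|R_t\|_{\cH}$. Alternatively, cite the Enchev--Stroock theorem on $\cH$-Lipschitz functionals.

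Your alternative in Step~3, differentiating $\varepsilon\mapsto\Ee\{f(M(X+\varepsilon h))\}$ directly, is also sound. It shows that \eqref{eq:malliavin-location} itself needs no Malliavin structure, only the tie lemma and absolute continuity of the law of $M$, just as in the paper's derivation of \eqref{eq:ibp}.
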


\begin{proof}
The Hilbert space $\cH$ is separable because $K$ is compact metric,
$t\mapsto R_t$ is continuous from $K$ to $\cH$, and the linear span of
$\{R_t:t\in K\}$ is dense in $\cH$.  Apply \eqref{eq:tie-lemma} to a
countable dense subset of $\cH$ and intersect the resulting full-measure
events.  Since
\[
 \sup_{t\in K}|h(t)|
 \leq\left(\sup_{t\in K}\|R_t\|_{\cH}\right)\|h\|_{\cH},
 \qquad h\in\cH,
\]
density shows that, on this common event, every $h\in\cH$ is constant on
the sample minimizer set.  If $s$ and $t$ are both sample minimizers,
then
\[
 \langle h,R_s-R_t\rangle_{\cH}=h(s)-h(t)=0,
 \qquad h\in\cH,
\]
and hence $R_s=R_t$.  Thus $R_T$ does not depend on the selection.

Let $(t_j)_{j\geq1}$ be dense in $K$, put
\[
 M_n=\min_{1\leq j\leq n}X(t_j),
\]
let $J_n$ be the smallest index in
$\argmin_{1\leq j\leq n}X(t_j)$, and put $T_n=t_{J_n}$.
The finite-dimensional Malliavin chain rule gives
\[
 DM_n=R_{T_n}\qquad\text{almost surely}.
\]
Indeed, a tie between $t_i$ and $t_j$ either has
$R_{t_i}=R_{t_j}$, in which case the two possible derivatives agree, or
$X(t_i)-X(t_j)=W(R_{t_i}-R_{t_j})$ is nondegenerate, in which case the
tie has probability zero.

Path continuity gives $M_n\to M$ almost surely and in $L^2$.  Every
cluster point of $(T_n)$ is a sample minimizer.  Continuity of
$t\mapsto R_t$ and the selection independence proved above therefore
give $R_{T_n}\to R_T$ almost surely in $\cH$.  The random elements
$R_{T_n}$ are uniformly bounded in $\cH$, so this convergence also holds
in mean square with values in $\cH$.  Closedness of $D$ proves
\eqref{eq:derivative-minimum}.

The absolute continuity of the law of $M$ proved above implies that $f$
is differentiable at $M$ almost surely.  The Malliavin chain rule gives
$Df(M)=f'(M)R_T$.  Gaussian integration by parts for deterministic
$h\in\cH$ now yields
\[
 \Ee\{W(h)f(M)\}
 =\Ee\{\langle h,Df(M)\rangle_{\cH}\}
 =\Ee\{f'(M)h(T)\},
\]
which proves \eqref{eq:malliavin-location}.
\end{proof}

Since $W(h)$ is centred, \eqref{eq:malliavin-location} has the
covariance-transfer form
\begin{equation}
 \Cov\bigl(W(h),f(M)\bigr)=\Ee\bigl\{f'(M)h(T)\bigr\}.
 \label{eq:covariance-transfer}
\end{equation}
In particular, for $s\in K$ and every finite signed measure $\eta$ on
$K$,
\[
 \Cov\bigl(X(s),f(M)\bigr)=\Ee\bigl\{f'(M)R(s,T)\bigr\},
\]
and
\[
 \Cov\left(\int_KX(t)\,\eta(dt),f(M)\right)
 =\Ee\bigl\{f'(M)k_\eta(T)\bigr\}.
\]
Taking $f(x)=x$ gives $\Cov(X(s),M)=\Ee\{R(s,T)\}$.  Taking instead
$f(x)=(x-u)_+$ recovers \eqref{eq:ibp}; with
$\mu_u=\mathcal L(T\mid M>u)$, it also gives the exact
identity
\[
 k_{\mu_u}(s)
 =\frac{\Ee\{X(s)(M-u)_+\}}{q(u)},\qquad s\in K.
\]
 
% \begin{remark}[Relation to the 2018 proof]
% The differentiability and curvature condition (4.6) in
% Chakrabarty--Samorodnitsky~\cite{chakrabarty2018asymptotic} are needed for their precise tail constant and
% their local Taylor analysis.  They are not needed for either conclusion
% above.  The overshoot follows from log-concavity plus the quadratic tail
% rate, while the location follows from Gaussian integration by parts.
% \end{remark}

\appendix
\section{Auxiliary results}

\subsection{Finite-dimensional Gaussian asymptotics}

We present below the Gaussian specialization of
\cite[Example~6.1, Theorem~5.1]{Hashorva2007}. 

\begin{theorem}[Gaussian tail and conditional excess limit]
\label{thm:finite-gaussian}
Let $\boldsymbol X$ be a centred Gaussian vector in $\mathbb R^n$ with
positive definite covariance matrix $\Sigma$, and let $\boldsymbol p$ be
the minimizer of the following problem:
\[
 \sigma_*^2=\min_{\boldsymbol v\geq0,
             \,\boldsymbol1^{\mathsf T}\boldsymbol v=1}
             \boldsymbol v^{\mathsf T}\Sigma\boldsymbol v.
\]
Put
\[
 I=\{i:p_i>0\},\qquad J=\{1,\ldots,n\}\setminus I,
 \qquad  
 \qquad \boldsymbol\theta=\Sigma_{II}^{-1}\boldsymbol1_I.
\]
Then
\[
 \boldsymbol\theta=\frac{\boldsymbol p_I}{\sigma_*^2}>0,
 \qquad
 \boldsymbol r_J=\Sigma_{JI}\Sigma_{II}^{-1}\boldsymbol1_I\geq\boldsymbol1_J,
 \qquad
 \boldsymbol1_I^{\mathsf T}\boldsymbol\theta=\frac1{\sigma_*^2}.
\]
Write $r_j$ for the $j$th component of $\boldsymbol r_J$.
Let
\[
 P=\{j\in J:r_j=1\},\qquad N=\{j\in J:r_j>1\},
\]
and let $G_P$ be centred Gaussian with covariance matrix
\[
 \Gamma_P=\Sigma_{PP}-\Sigma_{PI}\Sigma_{II}^{-1}\Sigma_{IP}.
\]
Set $c_P=\Pp\{G_P>0\}$, with $c_P=1$ when $P$ is empty.  If
$M=\min_{1\leq i\leq n}X_i$, then, as $u\to\infty$,
\begin{equation}
 \Pp\{M>u\}\sim
 \frac{c_P}{(2\pi)^{|I|/2}\det(\Sigma_{II})^{1/2}
                  \prod_{i\in I}\theta_i}
 u^{-|I|}\exp\left\{-\frac{u^2}{2\sigma_*^2}\right\}.
 \label{eq:finite-gaussian-tail}
\end{equation}
Let $(\xi_i)_{i\geq1}$ be independent unit exponential random variables.
If $P$ is nonempty, let $G_P^+$ have law
$\mathcal L(G_P\mid G_P>0)$ and be independent of $(\xi_i)_{i\geq1}$.
Furthermore, as $u\to\infty$,
\begin{equation}
 \mathcal L\left(
 \left(
   (u(X_i-u))_{i\in I},
   (X_j-u)_{j\in P},
   \frac{\boldsymbol X_N}{u}
 \right)
   \ \middle|\ M>u\right)
 \Longrightarrow
 \mathcal L\left(
   (\xi_i/\theta_i)_{i\in I},
   G_P^+,
   \boldsymbol r_N
 \right).
 \label{eq:finite-gaussian-excess}
\end{equation}
The $G_P^+$ and $\boldsymbol r_N$ blocks are omitted
when $P$ and $N$, respectively, are empty.
\end{theorem}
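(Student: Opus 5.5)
We prove Theorem~\ref{thm:finite-gaussian} by a direct Laplace-type analysis of the Gaussian density near the dominating point $u\boldsymbol p/\sigma_*^2$ of the quadratic program. The steps are ordered as follows: the KKT/algebraic facts first, then a change of variables isolating the active block, and finally dominated convergence for both the tail and the conditional law.

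\emph{Step 1: optimality conditions.} Since $\Sigma$ is positive definite, $\boldsymbol v\mapsto\boldsymbol v^{\mathsf T}\Sigma\boldsymbol v$ is strictly convex, so $\boldsymbol p$ is unique. The KKT conditions give $(\Sigma\boldsymbol p)_i=\sigma_*^2$ for $i\in I$ and $(\Sigma\boldsymbol p)_j\ge\sigma_*^2$ for $j\in J$; this is the finite case of Theorem~\ref{thm:optimal-measure-criterion}. Restricting to $I$ yields $\Sigma_{II}\boldsymbol p_I=\sigma_*^2\boldsymbol1_I$, hence $\boldsymbol\theta=\boldsymbol p_I/\sigma_*^2>0$ and $\boldsymbol1_I^{\mathsf T}\boldsymbol\theta=1/\sigma_*^2$. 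On $J$ we get $\Sigma_{JI}\boldsymbol p_I/\sigma_*^2=\boldsymbol r_J\ge\boldsymbol1_J$.

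\emph{Step 2: regression on the active block.} Write $\boldsymbol X_J=\Sigma_{JI}\Sigma_{II}^{-1}\boldsymbol X_I+\boldsymbol Z_J$, where $\boldsymbol Z_J$ is independent of $\boldsymbol X_I$ and has covariance $\Gamma_J$. Substitute $\boldsymbol X_I=u\boldsymbol1_I+\boldsymbol w/u$ with $\boldsymbol w>0$. The density of $\boldsymbol X_I$ at this point equals
\[
(2\pi)^{-|I|/2}\det(\Sigma_{II})^{-1/2}\exp\{-u^2/(2\sigma_*^2)-\boldsymbol\theta^{\mathsf T}\boldsymbol w-\boldsymbol w^{\mathsf T}\Sigma_{II}^{-1}\boldsymbol w/(2u^2)\},
\]
and the Jacobian is $u^{-|I|}$. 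Given this value of $\boldsymbol X_I$, we have $\boldsymbol X_J=u\boldsymbol r_J+O(|\boldsymbol w|/u)+\boldsymbol Z_J$. Thus $\{M>u\}$ becomes $\{\boldsymbol w>0\}\cap\{(r_j-1)u+O(|\boldsymbol w|/u)+Z_j>0,\ j\in J\}$. For $j\in N$ this event has conditional probability tending to $1$. For $j\in P$ it becomes $\{Z_j>-O(|\boldsymbol w|/u)\}$, whose probability tends to $\Pp\{\boldsymbol Z_P>0\}=c_P$, because $G_P=\boldsymbol Z_P$ is nondegenerate (as $\Gamma_P$ is a Schur complement of the positive definite $\Sigma$) and hence its boundary has null mass.

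\emph{Step 3: limits.} Integrate in $\boldsymbol w$ over $(0,\infty)^{|I|}$. The integrand is dominated by $e^{-\boldsymbol\theta^{\mathsf T}\boldsymbol w}$, which is integrable, so dominated convergence gives $\int e^{-\boldsymbol\theta^{\mathsf T}\boldsymbol w}d\boldsymbol w=\prod_{i\in I}\theta_i^{-1}$ times $c_P$. This proves \eqref{eq:finite-gaussian-tail}. For \eqref{eq:finite-gaussian-excess}, repeat the computation with a bounded continuous test function $\varphi$ of the vector
\[
\bigl(\boldsymbol w,\ \boldsymbol Z_P+O(|\boldsymbol w|/u),\ \boldsymbol r_N+(O(|\boldsymbol w|/u)+\boldsymbol Z_N)/u\bigr)
\]
inserted. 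The same domination yields the product limit: the normalized density $\prod_i\theta_i e^{-\theta_iw_i}$ gives independent $\xi_i/\theta_i$, the factor $\mathbf1\{\boldsymbol Z_P>0\}$ gives $G_P^+$ after division by $c_P$, and the $N$-block converges to $\boldsymbol r_N$.

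\emph{Main obstacle.} The delicate point is Step 2 on $P$. There the linear term vanishes, so the $O(|\boldsymbol w|/u)$ shift has to be controlled jointly with $\boldsymbol Z_P$, uniformly enough to apply dominated convergence. I would handle it by bounding the indicator by $1$ for domination and using a.s.\ pointwise convergence of $\mathbf1\{\boldsymbol Z_P>-\boldsymbol c|\boldsymbol w|/u\}$ for fixed $\boldsymbol w$, which relies on $\Pp\{\min_{j\in P}Z_j=0\}=0$. A second point to check is that the $N$-block contributes no loss. Since $r_j>1$ for $j\in N$, the failure probability is at most $\Pp\{Z_j<-(r_j-1)u/2\}$ once $|\boldsymbol w|\le cu^2$. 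The region $|\boldsymbol w|>cu^2$ is exponentially negligible under the $e^{-\boldsymbol\theta^{\mathsf T}\boldsymbol w}$ weight.
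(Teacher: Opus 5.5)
Your proof is correct, and it takes a genuinely different route from the paper. The paper gives no argument of its own for this theorem: it imports it as the Gaussian specialization of \cite[Example~6.1, Theorem~5.1]{Hashorva2007}, a general result for type I elliptical vectors (radial component in the Gumbel domain of attraction). It also takes $\boldsymbol\theta>0$ and $\boldsymbol r_J\geq\boldsymbol1_J$ from \cite[Lemma~4.1]{debicki2020extremes}.

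You instead derive the algebraic facts directly from the KKT conditions, which is the finite case of Theorem~\ref{thm:optimal-measure-criterion}. You then run a self-contained Laplace argument that uses two Gaussian-specific features:
\begin{enumerate}[(a)]
\item The exact expansion of the active-block quadratic form at $u\boldsymbol1_I+\boldsymbol w/u$ leaves a nonnegative remainder $\boldsymbol w^{\mathsf T}\Sigma_{II}^{-1}\boldsymbol w/(2u^2)$. This gives the domination by $e^{-\boldsymbol\theta^{\mathsf T}\boldsymbol w}$ for free.
\item The residual $\boldsymbol Z_J$ is independent of $\boldsymbol X_I$. This turns $\Pp\{M>u\}$ into an exact integral over the active block, from which the tail constant and the conditional limit follow together.
\end{enumerate}
The cited route buys generality, since it covers all type I elliptical laws and not only Gaussian ones. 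Yours buys transparency and an elementary, self-contained proof.

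Two small remarks. First, the uniformity discussion in your last paragraph (the split at $|\boldsymbol w|\leq cu^2$) is not needed. For each fixed $\boldsymbol w$ the integrand converges almost surely, and domination by $\|\varphi\|_\infty e^{-\boldsymbol\theta^{\mathsf T}\boldsymbol w}$ is all dominated convergence requires. Second, you should state explicitly that $c_P>0$, because you divide by it. This follows at once from the nondegeneracy of $G_P$, which you already established.
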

 
\subsection{The optimal measure for fractional Brownian motion}

\begin{lemma}
\label{lem:fbm-optimal}
Let $B_H$ and $\nu_H$ be as in Example~\ref{ex:fbm}.  Then $\nu_H$ is
the unique optimal measure for the covariance-energy problem on $[a,b], 0< a<b<\infty$.
For $0<H<1/2$, its density $\rho_H$ satisfies
\[
 \inf_{a<t<b}\rho_H(t)>0,\qquad \supp(\nu_H)=[a,b],
\]
and its potential is constant and positive on $[a,b]$.
For every $H\in(0,1)$, the covariance kernel $R_H$ is integrally strictly
positive definite on $[a,b]$.
\end{lemma}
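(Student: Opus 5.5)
The plan is to prove the last assertion (integral strict positive definiteness) first, since uniqueness in both regimes of $H$ then follows from Theorem~\ref{thm:optimal-measure-criterion}. After that it remains to verify optimality of the explicit candidate $\nu_H$ in each regime.

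\emph{Strict positive definiteness.} Use the harmonizable representation of fractional Brownian motion,
\[
 B_H(t)=c_H\int_{\mathbb R}\frac{e^{it\lambda}-1}{|\lambda|^{H+1/2}}\,\widetilde W(d\lambda),
\]
with $\widetilde W$ a complex Gaussian white noise. For a finite signed measure $\eta$ on $[a,b]$ this gives
\[
 \mathcal E_{R_H}(\eta)=c_H^2\int_{\mathbb R}\bigl|\widehat\eta(\lambda)-\eta([a,b])\bigr|^2|\lambda|^{-2H-1}\,d\lambda .
\]
If this vanishes, then $\widehat\eta(\lambda)=\eta([a,b])$ for every $\lambda$, since $\widehat\eta$ is continuous. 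Hence the signed measure $\eta-\eta([a,b])\delta_0$ has vanishing Fourier transform and is zero. Because $\supp\eta\subset[a,b]$ and $a>0$, this forces $\eta([a,b])=0$ and then $\eta=0$. This holds for every $H\in(0,1)$, so the last clause of Theorem~\ref{thm:optimal-measure-criterion} gives uniqueness of the optimal measure. It also gives $\sigma_H^2>0$, because $\mathcal E_R(\mu)>0$ for every $\mu\in\cM([a,b])$.

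\emph{Case $H\ge1/2$.} Here $k_{\delta_a}(t)=R_H(a,t)=\tfrac12\bigl(a^{2H}+t^{2H}-(t-a)^{2H}\bigr)$. Since $x\mapsto x^{2H}$ is convex with value $0$ at $0$, it is superadditive, so $(t-a)^{2H}+a^{2H}\le t^{2H}$. Therefore $k_{\delta_a}(t)\ge a^{2H}=\mathcal E_R(\delta_a)$ on $[a,b]$, and Theorem~\ref{thm:optimal-measure-criterion} shows that $\delta_a$ is optimal.

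\emph{Case $0<H<1/2$.} The positivity claims are elementary. Since $\kappa-1<0$, each factor satisfies $(t-a)^{\kappa-1}\ge(b-a)^{\kappa-1}$ and $(b-t)^{\kappa-1}\ge(b-a)^{\kappa-1}$. Moreover $A_H>0$ (an incomplete beta ratio) and $D_H>0$. Hence $\rho_H\ge(b-a)^{2\kappa-2}A_H>0$ on $(a,b)$, and $\supp\nu_H=[a,b]$.

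By the last sentence of Theorem~\ref{thm:optimal-measure-criterion}, optimality of $\nu_H$ is then equivalent to two facts: $\int_a^b\rho_H=1$, and $k_{\nu_H}$ is constant on $[a,b]$. Expanding $R_H$, and using the normalisation $\int\rho_H=1$, constancy amounts to
\[
 s^{2H}-\int_a^b|s-t|^{1-2\kappa}\rho_H(t)\,dt=\text{const},\qquad s\in[a,b].
\]
I would verify this after differentiating in $s$, i.e.\ check the singular integral equation
\[
 (1-2\kappa)\int_a^b\sgn(s-t)|s-t|^{-2\kappa}\rho_H(t)\,dt=2H\,s^{2H-1}.
\]
This equation splits along the two terms of $\rho_H$.
\begin{itemize}
\item The $A_H$ term: by the classical Sonin/Carleman formulae for the Riesz kernel $|x|^{-2\kappa}$, the weight $(t-a)^{\kappa-1}(b-t)^{\kappa-1}$ produces a constant odd-kernel potential. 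Substituting $t=a+(b-a)v$ reduces this to beta integrals.
\item The $D_H$ term: the extra factor $(b-t)/t$ is what generates the power $s^{2H-1}$. I would handle it via the Möbius substitution $t\mapsto ab/t$-type, or a Mellin-transform computation, producing the factor $\sin(\pi\kappa)/\pi$ from $\mathrm B(\kappa,1-\kappa)$.
\end{itemize}
The normalisation $\int\rho_H=1$ then fixes $A_H$ through an incomplete beta integral $\mathrm I_{a/b}(1-\kappa,2\kappa)$, consistent with the stated formula.

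This explicit verification of the singular integral identity and the mass normalisation is the main obstacle. Everything else reduces cleanly to Theorem~\ref{thm:optimal-measure-criterion} and the Fourier argument above.
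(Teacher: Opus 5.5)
\paragraph{Verdict: incomplete.} The soft parts of your plan coincide with the paper.
\begin{itemize}
\item For $H\ge1/2$ you use the same superadditivity argument.
\item For strict positive definiteness you use the spectral identity $\mathcal E_R(\eta)=c\int|\widehat\eta(\lambda)-\eta([a,b])|^2|\lambda|^{-1-2H}\,d\lambda$. The paper gets the same formula from the Fourier representation of $|x|^{2H}$ applied to $\eta-m_\eta\delta_0$.
\item For $0<H<1/2$ you reduce optimality, via Theorem~\ref{thm:optimal-measure-criterion}, to $\int_a^b\rho_H=1$ together with $U'\equiv0$ on $(a,b)$.
\end{itemize}
Your bound $\rho_H\ge A_H(b-a)^{2\kappa-2}$ is a neater proof of $\inf\rho_H>0$ than the paper's. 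However, for $0<H<1/2$ the proposal stops where the substance of the lemma begins. Neither the singular integral identity nor the normalisation is established, and naming a M\"obius substitution or a Mellin transform is not an argument. This is a genuine gap.

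\paragraph{Identities still to be proved.} With $\rho_1=(t-a)^{\kappa-1}(b-t)^{\kappa-1}$ and $\rho_2=(t-a)^{\kappa-1}(b-t)^{\kappa}t^{-1}$, you need
\[
 \int_a^b\sgn(y-t)|y-t|^{-2\kappa}\rho_1(t)\,dt=0,\qquad
 \int_a^b\sgn(y-t)|y-t|^{-2\kappa}\rho_2(t)\,dt
 =\frac{\pi}{\sin(\pi\kappa)}\frac{b^\kappa}{a^{1-\kappa}}\,y^{-2\kappa}.
\]
The first integral must vanish identically, not merely be \emph{a constant} as you wrote. It does vanish, because both one-sided pieces equal $\mathrm B(\kappa,1-2\kappa)(b-a)^{2\kappa-1}(y-a)^{-\kappa}(b-y)^{-\kappa}$ by Euler's integral. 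This is also why $A_H$ is fixed only by the normalisation.

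\paragraph{The missing idea for the second identity.} Substitute $x=b(t-a)/(t(b-a))$ and $z=b(y-a)/(y(b-a))$. The integral becomes $a^{\kappa-1}b^{\kappa}y^{-2\kappa}\Delta_\kappa(z)$, where
\[
 \Delta_\kappa(z)=\int_0^1\sgn(z-x)|z-x|^{-2\kappa}x^{\kappa-1}(1-x)^{\kappa}\,dx .
\]
You still have to show that $\Delta_\kappa$ does not depend on $z$. The paper writes $1-x=(1-z)+(z-x)$, so that $\Delta_\kappa=(1-z)J+V$, with $w(x)=x^{\kappa-1}(1-x)^{\kappa-1}$ and
\[
 J(z)=\int_0^1\sgn(z-x)|z-x|^{-2\kappa}w(x)\,dx,\qquad V(z)=\int_0^1|z-x|^{1-2\kappa}w(x)\,dx .
\]
The same symmetric beta identity used for $\rho_1$ gives $J\equiv0$, and then $V'=(1-2\kappa)J=0$. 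Hence $\Delta_\kappa\equiv V(0)=\mathrm B(1-\kappa,\kappa)=\pi/\sin(\pi\kappa)$. This is exactly what makes the $D_H$ term cancel $y^{-2\kappa}$.

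\paragraph{The normalisation.} You must actually compute
\[
 \int_a^b\rho_2=\frac{\pi}{\sin(\pi\kappa)}\,a^{\kappa-1}b^{\kappa}\bigl(1-\mathrm I_{a/b}(1-\kappa,2\kappa)\bigr).
\]
This follows from the same substitution, the ${}_2F_1$ representation of $\mathrm B_z(2\kappa,1-\kappa)$, and the beta-complement identity. The two terms of $\rho_H$ then contribute $\iota$ and $1-\iota$, where $\iota=\mathrm I_{a/b}(1-\kappa,2\kappa)$. Saying the result is ``consistent with the stated formula'' presupposes this computation.

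\paragraph{Regularity.} You should justify $U\in C^1((a,b))$, which holds because $2\kappa<1$. You also need continuity of $U$ on $[a,b]$ to pass from $U'=0$ on $(a,b)$ to constancy on the closed interval.
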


\begin{proof}
Suppose first that $H\geq1/2$.  For $t\geq a$,
\[
 R_H(a,t)-R_H(a,a)
 =\frac12\bigl(t^{2H}-a^{2H}-(t-a)^{2H}\bigr)\geq0,
\]
since $x\mapsto x^{2H}$ is superadditive on $[0,\infty)$.  Thus
$\delta_a$ is optimal by Theorem~\ref{thm:optimal-measure-criterion},
and its energy is $a^{2H}$.

Let now $0<H<1/2$, put $\kappa=1/2-H$, $d=b-a$, and write
\[
 \rho_1(t)=(t-a)^{\kappa-1}(b-t)^{\kappa-1},\qquad
 \rho_2(t)=(t-a)^{\kappa-1}(b-t)^{\kappa}t^{-1}.
\]
Thus $\rho_H=A_H\rho_1+D_H\rho_2$.  This function is integrable and positive
on $(a,b)$.  Indeed, $0<\kappa<1/2$, the only endpoint exponents are
$\kappa-1$ and $\kappa$, and $t^{-1}$ is bounded on $[a,b]$.  Since $A_H,D_H>0$,
$\rho_H$ is continuous and positive on $(a,b)$ and tends to infinity
at both endpoints.  Choose $\varepsilon>0$ so that $\rho_H\geq1$ on
$(a,a+\varepsilon)\cup(b-\varepsilon,b)$.  Its minimum on
$[a+\varepsilon,b-\varepsilon]$ is also positive, and therefore
$\inf_{a<t<b}\rho_H(t)>0$.

We first check the normalisation.  If
$\iota=\mathrm I_{a/b}(1-\kappa,2\kappa)$, the beta integral and the substitution
$x=b(t-a)/(td)$ give
\begin{align*}
 \int_a^b\rho_1(t)\,dt
 &=d^{2\kappa-1}\mathrm B(\kappa,\kappa),\\
 \int_a^b\rho_2(t)\,dt
 &=\frac{\pi}{\sin(\pi \kappa)}a^{\kappa-1}b^\kappa(1-\iota).
\end{align*}
For completeness, the inverse substitution and its differential are
\[
 t=\frac{ab}{b-dx},\qquad
 dt=\frac{abd}{(b-dx)^2}\,dx.
\]
Consequently,
\begin{align*}
 \int_a^b\rho_2(t)\,dt
 &=a^{\kappa-1}b^{-\kappa}d^{2\kappa}\mathrm B(\kappa,\kappa+1)
 {}_2F_1\left(2\kappa,\kappa;2\kappa+1;\frac db\right).
\end{align*}
Here ${}_2F_1$ denotes the Gauss hypergeometric function.
Now
\[
 \mathrm B_z(2\kappa,1-\kappa)
 =\frac{z^{2\kappa}}{2\kappa}{}_2F_1(2\kappa,\kappa;2\kappa+1;z),
 \qquad \mathrm B(\kappa,\kappa+1)=\frac12\mathrm B(\kappa,\kappa),
\]
and
\[
 \kappa\mathrm B(\kappa,\kappa)\mathrm B(2\kappa,1-\kappa)=\frac{\pi}{\sin(\pi \kappa)},
\]
while the beta-complement identity gives
$\mathrm I_{1-a/b}(2\kappa,1-\kappa)=1-\iota$.  These identities give the second
integral displayed above.  Hence the two terms in
$\int_a^b\rho_H(t)\,dt$ are $\iota$ and $1-\iota$, respectively.
Thus $\nu_H(dt)=\rho_H(t)\,dt$ is a probability measure and
$\supp(\nu_H)=[a,b]$.

Consider its potential
\[
 U(y)=\int_a^bR_H(y,t)\rho_H(t)\,dt.
\]
It is continuous on $[a,b]$.  To justify differentiation, fix a compact
interval $J_0\subset(a,b)$.  The density is bounded near every point of
$J_0$, and, uniformly for $y\in J_0$,
\[
 \int_{|t-y|<\varepsilon}|t-y|^{-2\kappa}\,dt
 =O(\varepsilon^{1-2\kappa}).
\]
Away from the diagonal, the endpoint singularities of $\rho_H$ are
integrable.  Hence $U\in C^1((a,b))$, and splitting at $t=y$ and
differentiating the two improper integrals is legitimate.  Since
$\int_a^b\rho_H(t)\,dt=1$ and $2H-1=-2\kappa$, this gives
\begin{equation}
 \frac{U'(y)}H
 =y^{-2\kappa}-A_H\{L_1(y)-Q_1(y)\}-D_H\{L_2(y)-Q_2(y)\},
 \label{eq:fbm-potential-derivative}
\end{equation}
where
\[
 L_i(y)=\int_a^y(y-t)^{-2\kappa}\rho_i(t)\,dt,\qquad
 Q_i(y)=\int_y^b(t-y)^{-2\kappa}\rho_i(t)\,dt.
\]
After the substitutions $t=a+(y-a)x$ and $t=y+(b-y)x$, respectively,
Euler's beta integral gives
\begin{equation}
 L_1(y)=Q_1(y)
 =\mathrm B(\kappa,1-2\kappa)d^{2\kappa-1}
   (y-a)^{-\kappa}(b-y)^{-\kappa}.
 \label{eq:fbm-first-beta-identity}
\end{equation}
We give a short calculation of the remaining difference.  With
\[
 x=\frac{b(t-a)}{td},\qquad z=\frac{b(y-a)}{yd},
\]
we have
\[
 t=\frac{ab}{b-dx},\qquad
 y-t=\frac{dyt}{ab}(z-x).
\]
It follows directly that
\begin{align*}
 L_2(y)
 &=a^{\kappa-1}b^{\kappa}y^{-2\kappa}
   \int_0^z(z-x)^{-2\kappa}x^{\kappa-1}(1-x)^\kappa\,dx,\qquad 
 Q_2(y)
 &=a^{\kappa-1}b^{\kappa}y^{-2\kappa}
   \int_z^1(x-z)^{-2\kappa}x^{\kappa-1}(1-x)^\kappa\,dx
\end{align*} implying
\begin{equation}
 L_2(y)-Q_2(y)=a^{\kappa-1}b^{\kappa}y^{-2\kappa}\Delta_\kappa(z),
 \label{eq:fbm-second-beta-reduction}
\end{equation}
where
\[
 \Delta_\kappa(z)=\int_0^1\sgn(z-x)|z-x|^{-2\kappa}x^{\kappa-1}(1-x)^\kappa\,dx.
\]
Put $w(x)=x^{\kappa-1}(1-x)^{\kappa-1}$ and
\[
 J(z)=\int_0^1\sgn(z-x)|z-x|^{-2\kappa}w(x)\,dx,\qquad
 V(z)=\int_0^1|z-x|^{1-2\kappa}w(x)\,dx.
\]
Since $1-x=(1-z)+(z-x)$,
\[
 \Delta_\kappa(z)=(1-z)J(z)+V(z).
\]
The same beta calculation as in \eqref{eq:fbm-first-beta-identity} gives
\[
 \int_0^z(z-x)^{-2\kappa}w(x)\,dx
 =\int_z^1(x-z)^{-2\kappa}w(x)\,dx
 =\mathrm B(\kappa,1-2\kappa)z^{-\kappa}(1-z)^{-\kappa}.
\]
Consequently $J(z)=0$.  Since $2\kappa<1$, differentiation under the integral
defining $V$ is justified by the same local integrability argument, so
$V'(z)=(1-2\kappa)J(z)=0$ and hence  
\[
 \Delta_\kappa(z)=V(0)=\mathrm B(1-\kappa,\kappa)=\frac{\pi}{\sin(\pi \kappa)}.
\]
Combining this with \eqref{eq:fbm-second-beta-reduction},
\[
 L_2(y)-Q_2(y)
 =\frac{\pi}{\sin(\pi \kappa)}\frac{b^\kappa}{a^{1-\kappa}}y^{-2\kappa}.
\]
Substitution in \eqref{eq:fbm-potential-derivative} and the definition of
$D_H$ now give $U'(y)=0$ for $a<y<b$.  Continuity at the endpoints shows
that $U$ is constant on $[a,b]$, and
Theorem~\ref{thm:optimal-measure-criterion} shows that $\nu_H$ is optimal.
Its energy is positive: if $t\geq s\geq a$, then
$R_H(s,t)\geq s^{2H}/2>0$.

It remains only to prove integral strict positive definiteness,
simultaneously for all $H\in(0,1)$.  We use the Fourier representation
 \[
 |x|^{2H}=c'_H\int_{\mathbb R}
 (1-\cos(\omega x))|\omega|^{-1-2H}\,d\omega,
 \qquad c'_H>0.
\]
Let $\eta$ be a finite signed measure on $[a,b]$, put
$m_\eta=\eta([a,b])$, and set
$\widetilde\eta=\eta-m_\eta\delta_0$.  Also write
$\widehat\eta(\omega)=\int_{[a,b]}e^{i\omega t}\eta(dt)$.  Since
$R_H(0,t)=0$, the covariance energy of $\eta$ equals that of
$\widetilde\eta$.  Moreover, $\widetilde\eta$ has total mass zero, so the
displayed identity gives, for a constant $c_H>0$,
\begin{align}
 \iint_{[a,b]^2}R_H(s,t)\,\eta(ds)\eta(dt)
 &=-\frac12\iint_{[0,b]^2}|t-s|^{2H}\,
                    \widetilde\eta(ds)\widetilde\eta(dt)\nonumber\\
 &=c_H\int_{\mathbb R}|\widehat\eta(\omega)-m_\eta|^2
                    |\omega|^{-1-2H}\,d\omega.
 \label{eq:fbm-strict-energy}
\end{align}
Because $\widetilde\eta$ has compact support and total mass zero,
$|\widehat\eta(\omega)-m_\eta|\leq C_\eta\min\{|\omega|,1\}$ for some
$C_\eta<\infty$.  Thus the
integrand in \eqref{eq:fbm-strict-energy} is $O(|\omega|^{1-2H})$ at zero
and $O(|\omega|^{-1-2H})$ at infinity, and is integrable because $0<H<1$.
If the integral vanishes, then the Fourier transform of
$\widetilde\eta$ vanishes identically, and hence $\widetilde\eta=0$.
Thus $\eta=m_\eta\delta_0$; since $\eta$ is supported on $[a,b]$ with $a>0$,
we conclude that $m_\eta=0$ and $\eta=0$.  This proves integral strict positive
definiteness, and the strict-energy part of
Theorem~\ref{thm:optimal-measure-criterion} proves uniqueness of $\nu_H$.
\end{proof}

\bibliographystyle{ieeetr}
\bibliography{GG,infArgmaxV9}

\begin{thebibliography}{10}

\bibitem{chakrabarty2018asymptotic}
A.~Chakrabarty and G.~Samorodnitsky, ``Asymptotic behaviour of high {G}aussian minima,'' {\em Stochastic Processes and their Applications}, vol.~128, no.~7, pp.~2297--2324, 2018.

\bibitem{debicki2020extremes}
K.~D{\c{e}}bicki, E.~Hashorva, and L.~Wang, ``Extremes of vector-valued {G}aussian processes,'' {\em Stochastic Processes and their Applications}, vol.~130, no.~9, pp.~5802--5837, 2020.

\bibitem{Hashorva2007}
E.~Hashorva, ``Asymptotic properties of type {I} elliptical random vectors,'' {\em Extremes}, vol.~10, pp.~175--206, 2007.

\bibitem{debickiHashorvaNovikov2026}
K.~D{\c{e}}bicki, E.~Hashorva, and S.~Novikov, ``Exact asymptotics for high infima of {G}aussian processes in finite-active regimes.'' Manuscript, 2026.

\bibitem{wu2019high}
Z.~Wu, A.~Chakrabarty, and G.~Samorodnitsky, ``High minima of non-smooth {G}aussian processes,'' {\em Electronic Communications in Probability}, vol.~24, pp.~Paper No. 53, 1--12, 2019.

\bibitem{muirheadSevero2024percolation}
S.~Muirhead and F.~Severo, ``Percolation of strongly correlated {G}aussian fields, {I}: Decay of subcritical connection probabilities,'' {\em Probability and Mathematical Physics}, vol.~5, no.~2, pp.~357--412, 2024.

\bibitem{adler2014existence}
R.~J. Adler, E.~Moldavskaya, and G.~Samorodnitsky, ``On the existence of paths between points in high level excursion sets of {G}aussian random fields,'' {\em The Annals of Probability}, vol.~42, no.~3, pp.~1020--1053, 2014.

\bibitem{selk2024large}
Z.~Selk, ``Large deviations for high minima of {G}aussian processes with nonnegatively correlated increments,'' {\em Statistics \& Probability Letters}, vol.~206, p.~110001, 2024.

\bibitem{pronzato2021minimum}
L.~Pronzato and A.~Zhigljavsky, ``Minimum-energy measures for singular kernels,'' {\em Journal of Computational and Applied Mathematics}, vol.~382, p.~113089, 2021.

\bibitem{Prekopa1973}
A.~Pr{\'e}kopa, ``On logarithmic concave measures and functions,'' {\em Acta Scientiarum Mathematicarum (Szeged)}, vol.~34, pp.~335--343, 1973.

\bibitem{Danskin1966}
J.~M. Danskin, ``The theory of max-min, with applications,'' {\em SIAM Journal on Applied Mathematics}, vol.~14, no.~4, pp.~641--664, 1966.

\bibitem{Phelps1978}
R.~R. Phelps, ``Gaussian null sets and differentiability of lipschitz map on {B}anach spaces,'' {\em Pacific Journal of Mathematics}, vol.~77, no.~2, pp.~523--531, 1978.

\bibitem{KimPollard1990}
J.~Kim and D.~Pollard, ``Cube root asymptotics,'' {\em The Annals of Statistics}, vol.~18, no.~1, pp.~191--219, 1990.

\bibitem{NualartVives1988}
D.~Nualart and J.~Vives, ``Continuit{\'e} absolue de la loi du maximum d'un processus continu,'' {\em C. R. Acad. Sci. Paris S{\'e}r. I Math.}, vol.~307, no.~7, pp.~349--354, 1988.

\bibitem{Nualart2006}
D.~Nualart, {\em The Malliavin Calculus and Related Topics}.
\newblock Berlin: Springer, 2~ed., 2006.

\bibitem{LanjriZadiNualart2003}
N.~Lanjri~Zadi and D.~Nualart, ``Smoothness of the law of the supremum of the fractional {B}rownian motion,'' {\em Electron. Commun. Probab.}, vol.~8, pp.~102--111, 2003.

\end{thebibliography}
\end{document}